\documentclass[11pt, oneside]{article}   	
\usepackage{geometry}                		
\usepackage{graphicx}

\usepackage{amssymb,amsmath,amsfonts,bbm,amsthm}
\usepackage{comment}
\usepackage{enumerate}
\usepackage{url}

\newtheorem{theo}{Theorem}[section]

\newtheorem{prop}[theo]{Proposition}

\newtheorem{lemma}[theo]{Lemma}

\newcommand{\Cc}{\mathbbm{C}}

\newcommand{\Qq}{\mathbbm{Q}}

\newcommand{\Aa}{\mathbbm{A}}
\newcommand{\Zz}{\mathbbm{Z}}

\DeclareMathOperator{\tr}{tr}

\DeclareMathOperator{\Ind}{Ind}

\DeclareMathOperator{\GL}{GL}
\DeclareMathOperator{\MM}{M}

\DeclareMathOperator{\SL}{SL}

\DeclareMathOperator{\cInd}{c-Ind}

\DeclareMathOperator{\supp}{supp}
\DeclareMathOperator{\cond}{cond}

\title{Computing the local $2$-component of a non-selfdual automorphic representation of $\GL_3$}
\author{YAMAMOTO Hirofumi }
\date{}							

\begin{document}
\maketitle

\begin{abstract}
In this paper, we explicitly determine the local $2$-adic component of a non-selfdual automorphic representation $\Pi$ of $\GL_3$ constructed by van Geemen and Top.
We prove that $\Pi_2$ is a parabolically induced representation of $\GL_3(\Qq_2)$ given by $\Pi_2 = \Ind_P^{\GL_3(\Qq_2)}(\pi\boxtimes \chi)$, where $P$ is the standard parabolic subgroup of $\GL_3$ with Levi subgroup $\GL_2 \times \GL_1$, $\chi$ is an unramified character of $\Qq_2^\times$ satisfying $\chi(2) = -2\sqrt{-1}$, and $\pi$ is a supercuspidal representation of $\GL_2(\Qq_2)$. Furthermore, we describe $\pi$ explicitly as a compactly induced representation $\pi = \cInd_{J_\alpha}^{\GL_2(\Qq_2)} \Lambda$ and determine the representation $\Lambda$ explicitly.
The proof relies on explicit computations of Hecke eigenvalues using computer calculations. The automorphic representation $\Pi$ is realized in the cuspidal cohomology of the congruence subgroup $\Gamma_0(128) \subset \SL_3(\Zz)$. By computing the Hecke eigenvalues of an associated Hecke eigenvector, we are able to uniquely identify the local structure of $\Pi_2$.
As an application, we obtain an explicit description of the $2$-adic local component of the Galois representation $\rho_{\mathrm{vGT},\ell}$ associated with $\Pi$.
\end{abstract}

\section{Introduction}
In this paper, we compute the local component $\Pi_2$ at $2$ of the non-selfdual cuspidal automorphic representation $\Pi$ of $\GL_3(\Aa_\Qq)$ constructed in \cite{vanGreemen1994}. It is proved in \cite[Theorem $1.2$]{Ito2018GaloisRA} that this representation is associated with the Galois representation $\rho_{\mathrm{vGT}, l}$ also constructed in \cite{vanGreemen1994}, by decomposing the transcendental part of the $l$-adic cohomology of a projective smooth surface $S_2$ whose affine model is 
\[
t^2 = xy(x^2-1)(y^2-1)(x^2-y^2+2xy).
\]

For any prime $p\ge 3$, the local component $\Pi_p$ of this representation is unramified, and its local L-factor can be computed by using the algorithm given in \cite{AshRudilph1979}. Using the same algorithm, we can compute several Hecke eigenvalues of $\Pi_2$ which enables us to identify $\Pi_{2}$ explicitly and obtain the following theorem.
\begin{theo}
Let $\psi$ be an additive character of $\Qq_2$ such that $\psi|_{2\Zz_2}$ is trivial and $\psi|_{\Zz_2}$ is non-trivial. We have
\[\Pi_2 = \Ind_P^{\GL_3(\Qq_2)}(\pi\boxtimes \chi),\] 
where $\Ind$ is the unnormalized induction, $\chi$ is the unramified character of $\Qq_2^\times$ with $\chi(2) = -2\sqrt{-1}$ and $\pi$ is the supercuspidal representation of $\GL_2(\Qq_2)$ given by 
\[
\pi = \cInd_{J_\alpha}^{\GL_2(\Qq_2)} \Lambda.
\]
Here $\cInd$ is the compact induction, $\alpha =\frac{1}{8} \left(\begin{matrix}&1\\-2&\end{matrix}\right)$, $E = \Qq_2(\alpha)$ is a quadratic extension of $\Qq_2$, $\mathfrak{P} = \{ \left(\begin{matrix} a&b\\c&d\end{matrix}\right) \in \MM_2(\Zz_2)\mid a,c,d\equiv 0 \bmod 2\}$, $U_\mathfrak{U}^3 = 1 + \mathfrak{P}^3$, $J_\alpha=E^\times U_\mathfrak{U}^3$, $\psi_\alpha$ is the character of $U_\mathfrak{U}^3 $ such that $\psi_\alpha(x) = \psi \circ \tr(\alpha (x-1))$ for $x \in U_\mathfrak{U}^3$ and $\Lambda$ is the character of $J_\alpha$ determined by 
\[
\begin{cases}
\Lambda(\left(\begin{matrix}a&\\&a\end{matrix}\right)) = \chi^{-1}(a) &a\in\Qq_2^\times,\\
\Lambda(\left(\begin{matrix}&1\\-2&\end{matrix}\right)) = \frac{1+\sqrt{-1}}{2},\\
\Lambda(\left(\begin{matrix}1&1\\-2&1\end{matrix}\right)) = \psi\left(\frac{1}{2}\right),\\
\Lambda|_{U_\mathfrak{U}^3} = \psi_\alpha.
\end{cases}
\]
\end{theo}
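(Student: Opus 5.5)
The strategy is to pin down $\Pi_2$ using two kinds of input: a priori constraints (conductor, central character, Galois side), and explicit Hecke eigenvalues of the cohomology class computed with the algorithm of \cite{AshRudilph1979}. Since $\Pi$ appears in the cuspidal cohomology of $\Gamma_0(128)$, the vector we use is fixed by the mirabolic-type congruence subgroup $K_0(2^7)$ of $\GL_3(\Zz_2)$. By the theory of newforms for $\GL_n$ (Jacquet--Piatetski-Shapiro--Shalika), the conductor of $\Pi_2$ is at most $2^7$, and the Hecke eigenvalue at $\mathrm{diag}(2,1,1)$ on the newvector computes the local $L$-factor $L(s,\Pi_2)$.

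The first step is to compute the conductor exponent exactly. We check that no Hecke eigenvector with the same eigenvalues at all $p\ge 3$ occurs at a smaller level $\Gamma_0(2^k)$, $k<7$, which gives $a(\Pi_2)=7$. Next, we compute the eigenvalue of the $U_2$-type operator on the newvector; this gives $L(s,\Pi_2)=(1-\chi(2)2^{-s})^{-1}$ up to normalization, with $\chi(2)=-2\sqrt{-1}$ after the unnormalized twist. Since $L(s,\Pi_2)$ has degree exactly $1$, $\Pi_2$ is not supercuspidal and not unramified. Running through the Bernstein--Zelevinsky classification of generic irreducible representations of $\GL_3(\Qq_2)$ with a degree-one $L$-factor, the possibilities are:
\begin{enumerate}[(i)]
\item $\pi\boxtimes\chi$ with $\pi$ supercuspidal and $\chi$ unramified;
\item a twisted Steinberg-type representation of $\GL_2$ with a ramified character;
\item principal series with exactly one unramified character.
\end{enumerate}
We exclude (ii) and (iii) by additional Hecke eigenvalues (for instance operators of the form $K_0(2^7)\,\mathrm{diag}(2,2,1)\,K_0(2^7)$ or twisted operators). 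We can also use that the central character of $\Pi$ is known, and that the Galois representation $\rho_{\mathrm{vGT},l}$ restricted to the decomposition group at $2$ has a $2$-dimensional piece coming from $\pi$. This leaves $\Pi_2=\Ind_P(\pi\boxtimes\chi)$ with $a(\pi)=7$, since conductors add and $\chi$ is unramified.

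Next, we determine $\pi$. By Bushnell--Kutzko, a supercuspidal $\pi$ of $\GL_2(\Qq_2)$ with conductor $2^7$ and odd normalized level is associated to a ramified simple stratum $(\mathfrak{U},n,\alpha)$ with $\mathfrak{U}$ the Iwahori order, giving $\pi=\cInd_{J_\alpha}\Lambda$ with $\Lambda|_{U^3_{\mathfrak U}}=\psi_\alpha$. The first task is to identify $\alpha$ up to $U^1$-conjugacy modulo $\mathfrak{P}^{-2}$. There are only finitely many candidates, and the central character ($\omega_\pi=\chi^{-1}\omega_\Pi$, computed from the $\mathrm{diag}(2,2,2)$ and $\Zz_2^\times$ action) fixes $\Lambda$ on scalars. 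After that, $\Lambda$ is determined by its values on the generators $\begin{pmatrix}&1\\-2&\end{pmatrix}$ and $\begin{pmatrix}1&1\\-2&1\end{pmatrix}$ of $E^\times$ modulo $\Qq_2^\times U^3$, subject to compatibility with $\psi_\alpha$. This leaves a finite list of candidate characters $\Lambda$.

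The main obstacle is to distinguish among these finitely many candidates $(\alpha,\Lambda)$, since the $L$-factor cannot see them. For each candidate we plan to compute explicitly the action of further Hecke operators (double cosets such as $K_0(2^7)\,g\,K_0(2^7)$ with $g$ involving $\begin{pmatrix}&1\\-2&\end{pmatrix}$ embedded in the $\GL_2$ block, or Atkin--Lehner-type elements) on the newvector of $\Ind_P(\pi\boxtimes\chi)$. This can be done through a Kirillov/Whittaker model, or by computing with the induced model the eigenvalues in terms of $\Lambda$ (essentially Gauss-sum-type quantities and the epsilon factor $\varepsilon(s,\pi,\psi)$). We would then compare these with the computer-computed eigenvalues of the cohomology class.

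The cleanest version of this comparison uses the Atkin--Lehner eigenvalue, which equals the root number $\varepsilon(1/2,\Pi_2,\psi)$. Together with Tunnell-type formulas for $\varepsilon(\pi)$ in terms of $\Lambda(\varpi_E)$ and $\Lambda$ on $1+\varpi_E$, this should single out $\Lambda\bigl(\begin{pmatrix}&1\\-2&\end{pmatrix}\bigr)=\frac{1+\sqrt{-1}}{2}$ and $\Lambda\bigl(\begin{pmatrix}1&1\\-2&1\end{pmatrix}\bigr)=\psi(1/2)$. If one eigenvalue is not enough to do this, we would add more double cosets until the candidate is unique.
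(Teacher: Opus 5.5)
\textbf{The gap is in your last step.} Write $\varpi_E=\left(\begin{smallmatrix}&1\\-2&\end{smallmatrix}\right)$. Once the central character is fixed, the candidates are $\alpha=\alpha(D,0)$ with $D=\pm1$. For each such $\alpha$ there are four characters $\Lambda$: they form a torsor under the characters of $E^\times/\Qq_2^\times U^3_E\cong(\Zz/2\Zz)^2$, and are distinguished by the signs of $\Lambda(\varpi_E)$ and $\Lambda(1+\varpi_E)$.

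The root number cannot separate these candidates. At odd level, $J_\alpha=E^\times U^3_{\mathfrak U}$ and $\Lambda$ is a character, so the Gauss sum collapses. Hence $\varepsilon(1/2,\pi,\psi)$ depends on $\Lambda$ only through $\Lambda(\alpha)=\chi(8)\Lambda(\varpi_E)$. Concretely, let $\phi$ be the quadratic character of $\Qq_2(\sqrt{-1})/\Qq_2$, so that $\phi(2)=1$, $\phi(-1)=\phi(3)=-1$ and $\phi|_{1+4\Zz_2}=1$. Since $\det U^3_{\mathfrak U}\subset 1+4\Zz_2$, the twist $\pi\otimes(\phi\circ\det)=\cInd_{J_\alpha}^{\GL_2(\Qq_2)}(\Lambda\cdot\phi\circ\det)$ has the same $\alpha$, central character, conductor and $L$-factor. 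It also has the same root number, because $\phi(\det\alpha)=\phi(2^{-5})=1$. But $\det(1+\varpi_E)=3$, so its value at $1+\varpi_E$ is $-\psi(1/2)$.

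Thus $\Ind_P(\pi\boxtimes\chi)$ and $\Ind_P((\pi\otimes\phi\circ\det)\boxtimes\chi)$ have the same Atkin--Lehner eigenvalue, and your expectation that it singles out $\Lambda(1+\varpi_E)=\psi(1/2)$ fails. For the same reason it does not determine the field $E$ either: $\tr\alpha=0$ and $\Lambda(8\alpha)^2=\chi(2)^{-1}$ for both $D=\pm1$.

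So ``add more double cosets until the candidate is unique'' is not a fallback; it is the actual content of the proof. You have to name operators that separate the candidates and compute their eigenvalues in terms of $(\alpha,\Lambda)$. The paper does this as follows. It uses $\dim\Pi^{K^{\GL_3}}=1$ and $f(s_2)=f(M_i)=0$ to reduce everything to $f(1)\in\pi^{K_7}$, which is spanned by an explicit $F$ supported on $J_\alpha\,\mathrm{diag}(8,1)K_7$. It then decomposes the double cosets of $\left(\begin{smallmatrix}36&1&\\128&4&\\&&4\end{smallmatrix}\right)$, $\left(\begin{smallmatrix}8&1&\\128&8&\\&&8\end{smallmatrix}\right)$ and $\left(\begin{smallmatrix}&1&\\-128&&\\&&1\end{smallmatrix}\right)$ explicitly. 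This turns the computed eigenvalues $8$, $32$ and $8-8\sqrt{-1}$ into values of $F$, giving respectively $D=-1$, $\Lambda(1+\varpi_E)=\psi(1/2)$ and $\Lambda(\varpi_E)=\frac{1+\sqrt{-1}}{2}$. Only the last of these is root-number information: that operator acts as the $\GL_2$ Atkin--Lehner operator on the Levi block.

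Your first half, by contrast, is a workable alternative to Propositions~\ref{prop32} and~\ref{prop33}, with two fixes:
\begin{enumerate}[(a)]
\item Use both $T_{\mathrm{diag}(2,1,1)}$ and $T_{\mathrm{diag}(2,2,1)}$, here with eigenvalues $0$ and $2\sqrt{-1}$. For $\GL_3$ the newform eigenvalues give both elementary symmetric functions of the roots of $L(s,\Pi_2)^{-1}$; you need a theorem to that effect, transported to the transpose-inverse level group $K^{\GL_3}$.
\item Exclude the other generic types by parity rather than by unspecified extra operators. With trivial central character, $\mathrm{St}_2(\chi_1)\times\chi_2$ and principal series with exactly one unramified character have even conductor exponent. $\mathrm{St}_3(\chi)$ with $\chi$ unramified, which is missing from your list, has exponent $2$.
\end{enumerate}
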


By this theorem and \cite[Theorem $1.2$]{Ito2018GaloisRA}, we obtain that 
\[
\rho_{\mathrm{vGT}, l}|_{W_{\Qq_2}} = \rho_{\pi}\otimes  |\cdot|^{1/2}\oplus  \chi\otimes  |\cdot|^{-1},
\]
where $W_{\Qq_2}$ is the Weil group of $\Qq_2$,  $\rho_\pi$ is the irreducible $2$-dimensional representation corresponding to the supercuspidal representation $\pi$ by the local Langlands correspondence for $\GL_2$, $|\cdot|$ is the normalized absolute value on $\Qq_2^\times$, and $|\cdot|$ and $ \chi $ are identified with the characters of $W_{\Qq_2}$ by the local class field theory.

We prove this theorem in three steps. First, we compute the eigenvalues of $\Pi_2$ for several Hecke operators by using a computer. Second, we show that $\Pi_2$ is not supercuspidal by using the fact that the eigenvalue of the Hecke operator associated with $\left(\begin{matrix}2&&\\&2&\\&&1\end{matrix}\right)$ is nonzero. Third, we show that $\Pi_2$ is of the form $\Ind_P^{\GL_3(\Qq_2)} (\pi \boxtimes \chi)$, where $\pi$ is a supercuspidal representation of $\GL_2(\Qq_2)$ whose conductor is $2^7$ and $\chi$ is an unramified character of $\Qq_2^\times$, by using the eigenvalues of the Hecke operators associated with $\left(\begin{matrix}1&&\\64&1&\\&&1\end{matrix}\right)$ and $\left(\begin{matrix}2&&\\&1&\\&&1\end{matrix}\right)$. Finally, we identify $\pi$ by using calculation of the eigenvalues of the Hecke operators associated with $\left(\begin{matrix}36&1&\\128&4&\\&&4\end{matrix}\right)$, $\left(\begin{matrix}8&1&\\128&8&\\&&8\end{matrix}\right)$ and $\left(\begin{matrix}&1&\\-128&&\\&&1\end{matrix}\right)$.

{\bf Acknowledgements:} The author is grateful to his advisor Yoichi Mieda for a lot of guidance, useful comments and encouragement.

\section{Modular forms and Hecke operators}
For an integer $N\ge 1$, we define the subgroup of $\Gamma_0(N)$ of $ \SL_3(\Zz)$ by
\[
\Gamma_0(N)  = \{\left(\begin{matrix} *&*&*\\a&*&*\\b&*&*\end{matrix}\right) \in \SL_3(\Zz) \mid a\equiv b\equiv 0 \bmod N\}
\]
and define the finite set $\mathbbm{P}^2(\Zz/N)$ 
\begin{align*}
\mathbbm{P}^2(\Zz/N) &= \{(x,y,z) \in (\Zz/N)^3 \mid x\Zz/N+y\Zz/N+z\Zz/N =\Zz/N\} / (\Zz/N)^\times \\
&\cong  \SL_3(\Zz) / \Gamma_0(N) .
\end{align*}

The automorphic representation of $\mathrm{GL}_{3}$ that we study is defined by a cuspidal
cohomology class in \(H^{3}(\Gamma_{0}(128),\mathbb{C})\). We denote the subspace of all cuspidal
classes by \(H^{3}_{!}(\Gamma_{0}(N),\mathbb{C})\). By \cite[Lemma~3.5]{ASH1984412}, there exists a surjective map
\[
H_{3}(\Gamma_{0}(N),\mathbb{C}) \longrightarrow H^{3}_{!}(\Gamma_{0}(N),\mathbb{C}),
\]
where \(H_{3}(\Gamma_{0}(N),\mathbb{C})\) denotes the dual space of \(H^{3}(\Gamma_{0}(N),\mathbb{C})\).

\begin{theo}[{\cite[Theorem $3.2$ and Proposition $3.12$]{ASH1984412}}] \label{thm2.1}
There is a canonical isomorphism between 
\[
H_3(\Gamma_0(N), \Cc)
\]
and the vector space of maps $f \colon \mathbbm{P}^2(\Zz/N)\to \Cc$ which satisfy
\begin{enumerate}
\item $f(x:y:z) = -f(-y:x:z)$,
\item $f(x:y:z) = f(z:x:y)$,
\item $f(x:y:z) + f(-y:x-y:z) + f(y-x:-x:y) = 0$.
\end{enumerate}
\end{theo}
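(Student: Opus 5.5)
The plan follows Ash--Grayson--Green. First, since $\Gamma_0(N)$ has finite index in $\SL_3(\Zz)$, Shapiro's lemma gives
\[
H_3(\Gamma_0(N),\Cc)\cong H_3(\SL_3(\Zz), \Cc[\SL_3(\Zz)/\Gamma_0(N)]) = H_3(\SL_3(\Zz),\Cc[\mathbbm{P}^2(\Zz/N)]).
\]
So it suffices to compute $H_3(\SL_3(\Zz),M)$ for the permutation module $M=\Cc[\mathbbm{P}^2(\Zz/N)]$, or dually $M^\vee$, the space of functions $f\colon\mathbbm{P}^2(\Zz/N)\to\Cc$. The identification of cosets with $\mathbbm{P}^2(\Zz/N)$ sends $g\Gamma_0(N)$ to the reduction of the first column of $g$. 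This is exactly the column that the congruence conditions defining $\Gamma_0(N)$ constrain.

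Second, I compute $H_3(\SL_3(\Zz),-)$ through the top-degree part of the Borel--Serre duality. The virtual cohomological dimension of $\SL_3(\Zz)$ is $3$, and over $\Cc$ duality gives
\[
H^3(\SL_3(\Zz),V)\cong H_0(\SL_3(\Zz),\mathrm{St}\otimes V),
\]
where $\mathrm{St}$ is the Steinberg module. (Dually, $H_3$ corresponds to $\mathrm{Hom}_{\SL_3(\Zz)}(\mathrm{St},V^\vee)$.) The key input is the Ash--Rudolph presentation of $\mathrm{St}$ for $\SL_3(\Zz)$. The modular symbols $[v_1,v_2,v_3]$ with $(v_1,v_2,v_3)$ a basis of $\Zz^3$ (unimodular symbols) generate $\mathrm{St}$. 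A complete set of relations is
\begin{enumerate}
\item sign change under permutation of the $v_i$, and invariance under $v_i\mapsto -v_i$ up to sign;
\item the three-term (Manin-type) relation $[v_1,v_2,v_3]-[v_1+v_2,v_2,v_3]\ldots$ coming from the boundary of a simplex in the Tits building.
\end{enumerate}
Because $\SL_3(\Zz)$ acts simply transitively on oriented unimodular bases up to sign, an equivariant map $\mathrm{St}\to M^\vee$ is determined by its value on $[e_1,e_2,e_3]$. That value is an element of $M^\vee$, that is, a function $f$ on $\mathbbm{P}^2(\Zz/N)$, via $f(x:y:z)=\Phi([g e_1,ge_2,ge_3])$ evaluated at the coset.

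Third, I translate each relation. Each Steinberg relation becomes the statement that $\Phi([e_1,e_2,e_3])$ is invariant, or satisfies a linear identity, under right multiplication by specific matrices in $\SL_3(\Zz)$. Their action on the coset point $(x:y:z)$ gives the three listed identities. The cyclic permutation matrix of determinant $1$ produces (2). The matrix $\left(\begin{smallmatrix}0&-1&0\\1&0&0\\0&0&1\end{smallmatrix}\right)$ combined with the antisymmetry of the symbol produces (1). The order-3 element $\left(\begin{smallmatrix}0&-1&0\\1&-1&0\\0&0&1\end{smallmatrix}\right)$, which arises from the three-term relation, produces (3); its powers give the points $(-y:x-y:z)$ and $(y-x:-x:y)$ after composing with the cyclic relation. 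Canonicity follows because every step (Shapiro, duality, presentation) is natural.

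The main obstacle is the completeness of the Ash--Rudolph relations, i.e.\ that unimodular symbols with these relations present $\mathrm{St}$ and not just generate it. I would take this from Ash--Rudolph and Bykovskii as the black box. The remaining risk is bookkeeping: getting the precise matrices and sign conventions so that relation (3) comes out exactly in the stated form. I would check that by direct matrix multiplication.
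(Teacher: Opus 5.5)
The paper gives no proof of this statement; it quotes it from Ash--Grayson--Green. Your framework is the standard derivation of this description and is sound. It consists of homological Shapiro, top-degree Borel--Serre duality giving $H_3(\Gamma_0(N),\Cc)\cong\mathrm{Hom}_{\SL_3(\Zz)}(\mathrm{St},\mathrm{Fun}(\mathbbm{P}^2(\Zz/N),\Cc))$, cyclicity of $\mathrm{St}$ on $\sigma=[e_1,e_2,e_3]$, and a complete presentation by unimodular symbols (Bykovskii) taken as a black box. You correctly locate the real input in the completeness of the relations.

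Use the first-column identification and the action $(hF)(p)=F(h^{-1}p)$. Then the relations $S\sigma=[e_2,-e_1,e_3]=-\sigma$ with $S=\left(\begin{smallmatrix}0&-1&0\\1&0&0\\0&0&1\end{smallmatrix}\right)$ and $C'\sigma=[e_3,e_1,e_2]=\sigma$ with $C'=\left(\begin{smallmatrix}0&1&0\\0&0&1\\1&0&0\end{smallmatrix}\right)$ do give (1) and (2). The first of these needs the sign relation to be exact invariance $[-v_1,v_2,v_3]=[v_1,v_2,v_3]$, since symbols depend only on lines. Invariance merely ``up to sign'' is not enough: with an extra sign you would get $f(x:y:z)=f(-y:x:z)$ instead of (1).

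The step that fails is your treatment of (3). Put $M=\left(\begin{smallmatrix}0&-1&0\\1&-1&0\\0&0&1\end{smallmatrix}\right)$. Then $M\sigma=[e_2,-e_1-e_2,e_3]$ and $M^2\sigma=[-e_1-e_2,e_1,e_3]$, so the three-term relation reads $(1+M+M^2)\sigma=0$. Since $M^{-1}=M^2$, this gives $F(p)+F(Mp)+F(M^2p)=0$, and $M^2(x,y,z)^T=(y-x,-x,z)^T$. So your method yields
$f(x:y:z)+f(-y:x-y:z)+f(y-x:-x:z)=0$.

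Your claim that composing with the cyclic relation turns the last point into $(y-x:-x:y)$ is false. Relations (1) and (2) only apply invertible signed-permutation maps to $(y-x:-x:z)$, so $z$ always survives as a coordinate. In fact the printed version cannot be proved by any argument. The map $(x,y,z)\mapsto(y-x,-x,y)$ is singular, and for $(x:y:z)=(0:0:1)$ it gives $(0,0,0)$. That is not a point of $\mathbbm{P}^2(\Zz/N)$, so $f(y-x:-x:y)$ is not even defined.

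Relation (3) as stated is a misprint for the version with $z$ in the last slot, and that corrected statement is exactly what your argument proves. The direct matrix check you deferred to ``bookkeeping'' is what exposes this. You should have flagged the discrepancy rather than asserting a derivation of the misprinted relation.
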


By \cite[Section $4$]{ASH1984412}, for $\alpha \in \GL_3(\Qq)$ we can define the Hecke operator 
\[
T^{\GL_3}_\alpha \colon H^3(\Gamma_0(N)) \to H^3(\Gamma_0(N)),
\]
which corresponds to the double coset $\Gamma_0(N)\alpha\Gamma_0(N)$.
Also, by \cite[Section $2.5$]{vanGreemen1994}, there exists a Hecke eigenvector $f\in H^3(\Gamma_0(128))$ such that for all odd prime $p$,
\[
T^{\GL_3}_{\left(\begin{smallmatrix}p&&\\&1&\\&&1\end{smallmatrix}\right)} f= a_pf
\]
with $a_p \in \Zz[\sqrt{-1}]$ and $a_3 = 1+2\sqrt{-1}$. Moreover, $f$ is in $H^3_!(\Gamma_0(128), \Cc)$ by \cite[Section $1.2$]{Ito2018GaloisRA}.

Using explicit computer computation, we obtained the following results. First, by Theorem \ref{thm2.1}, we verified that $H_3(\Gamma_0(128), \Cc)$ is $58$-dimensional. Moreover, using the algorithm described in \cite[Section $2.10$]{MR1474576}, we found that the eigenspace 
\[
\{ f' \in H^3(\Gamma_0(128)) \mid T^{\GL_3}_{\left(\begin{smallmatrix}3&&\\&1&\\&&1\end{smallmatrix}\right)} f' =(1+2\sqrt{-1})f'\}
\]
is $1$-dimensional. Applying the same algorithm, we obtained the following Hecke eigenvalues of $f$: 

\begin{align}
	T^{\GL_3}_{\left(\begin{smallmatrix}2&&\\&2&\\&&1\end{smallmatrix}\right)}f &= 2\sqrt{-1}f \label{hecke1},\\
	T^{\GL_3}_{\left(\begin{smallmatrix}2&&\\&1&\\&&1\end{smallmatrix}\right)}f &= 0 \label{hecke2},\\
	T^{\GL_3}_{\left(\begin{smallmatrix}1&&\\64&1&\\&&1\end{smallmatrix}\right)}f&=-f\label{hecke3},\\
	T^{\GL_3}_{\left(\begin{smallmatrix}36&1&\\128&4&\\&&4\end{smallmatrix}\right)}f&=8f \label{hecke6},\\
	T^{\GL_3}_{\left(\begin{smallmatrix}8&1&\\128&8&\\&&8\end{smallmatrix}\right)}f&=32f \label{hecke7},\\
	T^{\GL_3}_{\left(\begin{smallmatrix}&1&\\-128&&\\&&1\end{smallmatrix}\right)}f&=(8-8\sqrt{-1})f\label{hecke5},\\
	T^{\GL_3}_{\left(\begin{smallmatrix}a&&\\&a&\\&&a\end{smallmatrix}\right)}f&=f &&(a \in \Qq^{\times})\label{centchar}.
\end{align}
The source code of the computer programs used in these computations is available at the following URL: \url{https://github.com/yamamoto277/culc_of_heckeeigenval/blob/11ad32c85444c0207343af5e596e6d6b74fcec55/culc_of_heckeeigenval.py}.

Let $\Pi$ be the $2$-component of the cuspidal automorphic representation corresponding to $f$. 
We define the compact open subgroups $K^{\GL_3}$ and $K'^{\GL_3}$ by
\begin{align*}
K^{\GL_3} &:= \left\{A = (a_{ij})\in\GL_3(\Zz_2)\mid a_{21} \equiv a_{31} \equiv 0 \bmod 128\right\},\\
K'^{\GL_3} &:= \left\{A = (a_{ij})\in\GL_3(\Zz_2)\mid a_{21} \equiv a_{31} \equiv 0 \bmod 64\right\}.
\end{align*}

\begin{prop}\label{prop22}
We have $\dim \Pi^{K^{\GL_3}}=1$ and $ \Pi^{K'^{\GL_3}}=0$.
\end{prop}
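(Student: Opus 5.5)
The plan is to pass between local fixed vectors and classical cohomology of congruence subgroups. For this we use strong approximation for $\SL_3$ together with the standard dictionary in which $\Gamma_0(N)$-cohomology corresponds to $K_0(N)$-fixed vectors in the finite part of the automorphic representation. Fix the cuspidal automorphic representation $\Pi^{\mathrm{glob}}$ attached to $f$. For odd $p$, its components are unramified because $f$ has level $128$ and we take the hyperspecial maximal compact at $p$. By multiplicity one for $\GL_3$ and strong multiplicity one, the $\Pi^{\mathrm{glob}}$-isotypic part of
\[
H^3_!(\Gamma_0(N),\Cc)
\]
is identified with $\Pi_2^{K_0(2^k)}$, up to a fixed finite-dimensional factor coming from the archimedean $(\mathfrak g,K)$-cohomology. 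Here $N=2^k$, the identification uses that the central character is trivial by (\ref{centchar}), and $K_0(2^k)$ is defined like $K^{\GL_3}$ but with $2^k$ in place of $128$. For the cuspidal cohomological representations of $\GL_3$ relevant here, the archimedean factor is one-dimensional in degree $3$.

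Step 1 is to establish the upper bound $\dim\Pi^{K^{\GL_3}}\le 1$ and the nonvanishing. A nonzero vector exists because $f\neq 0$ lives in level $128$. For the bound, any vector in $\Pi_2^{K^{\GL_3}}$ produces a class in $H^3_!(\Gamma_0(128),\Cc)$ on which $T^{\GL_3}_{\mathrm{diag}(3,1,1)}$ acts by $a_3=1+2\sqrt{-1}$, since the Hecke algebra at $3$ acts through $\Pi_3$. The computer computation shows that this eigenspace in $H^3(\Gamma_0(128))$ is $1$-dimensional, which forces $\dim\Pi^{K^{\GL_3}}=1$. One should check that the determinant condition (defining $\Gamma_0$ inside $\SL_3$) does not change the count. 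Strong approximation gives $\GL_3(\Qq)\backslash\GL_3(\Aa_f)/K\cong\Gamma\backslash\cdot$ as a single component, because $\det K$ surjects onto $\hat\Zz^\times$.

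Step 2 is to show $\Pi^{K'^{\GL_3}}=0$ using the operator in (\ref{hecke3}). The element $g=\left(\begin{smallmatrix}1&&\\64&1&\\&&1\end{smallmatrix}\right)$ lies in $K'^{\GL_3}$. I would identify $T_g$ on the line $\Pi^{K^{\GL_3}}$ with the operator $v\mapsto \sum_{h\in K/(K\cap gKg^{-1})}\pi(hg)v$. This is a scalar multiple of the composite of the averaging projector onto $\Pi^{K'^{\GL_3}}$ (up to normalization) with the inclusion. Concretely, $K'^{\GL_3}$ is generated by $K^{\GL_3}$ and $g$, and
\[
K'^{\GL_3}=K^{\GL_3}\sqcup K^{\GL_3}gK^{\GL_3},
\]
so $e_{K'}=\frac{1}{[K':K]}\bigl(e_K+T_g\bigr)$ on $K$-fixed vectors. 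Hence $T_g f=-f$ gives $e_{K'}f=0$. Since $\Pi^{K'}\subset\Pi^{K}$, which is the line spanned by $f$, we conclude $\Pi^{K'}=0$.

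The main obstacle is this double coset computation: we must verify that $K'=K\cup KgK$ and that the normalization of $T^{\GL_3}_g$ from \cite{ASH1984412}, transported to the adelic setting, is exactly the sum over $KgK/K$ with no extra factor. I would verify this by counting: $[K':K]=8$, and $KgK/K$ should have $7$ cosets, indexed by the lower-left entries $64u$ modulo $128$ together with a line in $(\Zz/2)^2$. One also needs to confirm that only the identity and $g$ classes occur, which follows from $K'/ (\text{kernel})$ acting through $\mathbbm{P}^1(\Zz/2)$-type data.
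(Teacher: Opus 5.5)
Your proposal is correct and is essentially the paper's argument: $\dim\Pi^{K^{\GL_3}}=1$ comes from the one-dimensional $a_3$-eigenspace, and $\Pi^{K'^{\GL_3}}=0$ because (\ref{hecke3}) is incompatible with $K'^{\GL_3}$-invariance. The paper phrases this as $f$ coming from $H^3(\Gamma_0(64))$, where $T_g$ would act by its degree, while you write $[K'^{\GL_3}:K^{\GL_3}]\,e_{K'}=e_K+T_g$ on $\Pi^{K^{\GL_3}}$, which is the same point made explicit. One slip in your count: $[K'^{\GL_3}:K^{\GL_3}]=4$ and $K^{\GL_3}gK^{\GL_3}/K^{\GL_3}$ has $3$ elements, the cosets of $\left(\begin{smallmatrix}1&&\\64u&1&\\64w&&1\end{smallmatrix}\right)$ with $(u,w)\in\{0,1\}^2\setminus\{(0,0)\}$ (the three terms used in the proof of Proposition \ref{prop33}), not $8$ and $7$; this does not affect your conclusion, since $e_{K'}f=\tfrac14(f+T_gf)=0$ regardless.
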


\begin{proof}
The claim $\dim \Pi^{K^{\GL_3}}=1$ follows from the fact that the eigenspace $\{ f' \in H^3(\Gamma_0(128)) \mid T^{\GL_3}_{\left(\begin{smallmatrix}3&&\\&1&\\&&1\end{smallmatrix}\right)} f' =(1+2\sqrt{-1})f'\}$ is $1$-dimensional. Assume $ \Pi^{K'^{\GL_3}}\not=0$. Since $ \Pi^{K'^{\GL_3}} \subset \Pi^{K^{\GL_3}}$, we have $ \Pi^{K'^{\GL_3}} = \Pi^{K^{\GL_3}}$. Therefore $f$ belongs to the image of $ H^3(\Gamma_0(64)) \to  H^3(\Gamma_0(128))$, which contradicts to Equation (\ref{hecke3}). Therefore we obtain $\Pi^{K'^{\GL_3}}=0$.
\end{proof}
For $g \in \GL_3(\Qq_2)$, we define the Hecke operator $T^\Pi_g$ acting on $\Pi^{K^{\GL_3}}$ by 
\begin{align*}
T^\Pi_g v = \sum_{i} \Pi(g_i)v &&v\in \Pi^{K^{\GL_3}} ,
\end{align*}
where $K^{\GL_3}gK^{\GL_3} = \coprod_i g_iK^{\GL_3}$. By Proposition \ref{prop22}, for $v \in  \Pi^{K^{\GL_3}}$ and $\alpha \in \GL_3(\Qq)$ such that $\alpha \in \GL_3(\Zz_p)$ for all primes $p>2$, we have 
\[
T^\Pi_\alpha v = a_\alpha v,
\]
where $a_\alpha$ is defined by $T^{\GL_3}_\alpha f = a_\alpha f$.  

\section{Determination of $\Pi$}
Fix $v\in \Pi^{K^{\GL_3}}$.
We have the double coset decomposition
\begin{align*}
K^{\GL_3}\left(\begin{matrix}2&&\\&2&\\&&1\end{matrix}\right)K^{\GL_3} =\coprod_{i,j\in \{0,1\}} \left(\begin{matrix}2&&i\\&2&j\\&&1\end{matrix}\right)K^{\GL_3} \amalg \coprod_{i\in\{0,1\}}\left(\begin{matrix}2&i&\\&1&\\&&2\end{matrix}\right)K^{\GL_3}.
\end{align*}
Let us define a Hecke operator $T$ acting on $v$ by
\begin{align*}
Tv :&= T^\Pi_{\left(\begin{smallmatrix}2&&\\&2&\\&&1\end{smallmatrix}\right)}v\\
&=\sum_{x,y\in \{0,1\}} \Pi(\left(\begin{matrix}2&&x\\&2&y\\&&1\end{matrix}\right))v + \sum_{x\in\{0,1\}}\Pi(\left(\begin{matrix}2&x&\\&1 &\\&&2\end{matrix}\right))v.
\end{align*}
Then, by Equation ($\ref{hecke1}$), we obtain $Tv= 2\sqrt{-1}v$.

\begin{lemma}\label{lemma31}
For each positive integer $n$, we have
\begin{align*}
&T^nv \\
&=\sum_{i=0}^n\Pi(\left(\begin{matrix}2^{n}&&\\&2^{n-i}&\\&&2^i\end{matrix}\right))\sum_{j=0}^{\min(i, n-i)} a_{n,i,j}\sum_{\substack{ 0\le x < 2^i\\0\le y<2^{n-i}\\ 0\le z< 2^{n-i-j}}}\Pi(\left(\begin{matrix}1&2^{-i}x&2^{i-n}y+2^{j-n}xz\\&1&2^{i+j-n}z\\&&1\end{matrix}\right))v,
\end{align*}
where $a_{n,i,j}$'s are constants depending only on $n$, $i$ and $j$.
\end{lemma}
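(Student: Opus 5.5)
Induction on $n$. For $n=1$ we rewrite the definition of $T$ in the claimed shape. The four terms $\left(\begin{smallmatrix}2&&x\\&2&y\\&&1\end{smallmatrix}\right)$ factor as $\mathrm{diag}(2,2,1)\left(\begin{smallmatrix}1&&x/2\\&1&y/2\\&&1\end{smallmatrix}\right)$. These are the $i=0$ terms: $x$ ranges over $\{0\}$, and the pair $(y,z)$ with $0\le y<2$, $0\le z<2$ gives the upper right entries $y/2$ and $z/2$. The two terms $\left(\begin{smallmatrix}2&x&\\&1&\\&&2\end{smallmatrix}\right)=\mathrm{diag}(2,1,2)\left(\begin{smallmatrix}1&x/2&\\&1&\\&&1\end{smallmatrix}\right)$ are the $i=1$ terms, with $y,z\in\{0\}$. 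Taking $a_{1,0,0}=a_{1,1,0}=1$ gives the case $n=1$.

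For the induction step we write $T^{n+1}v=T(T^nv)$ and use that $T^nv\in\Pi^{K^{\GL_3}}$. The cleaner route is to express everything through coset representatives: $T^nv=\sum_\gamma\Pi(\gamma)v$, where $\gamma$ runs over $\Gamma_n=(K\,\mathrm{diag}(2,2,1)K)^{*n}/K$ counted with multiplicity. The set $\Gamma_{n+1}$ then consists of the products $\gamma\cdot g_k$, where $g_k$ runs over the six representatives of the $n=1$ case.

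Every element of $\Gamma_n$ lies in $B(\Qq_2)K$, where $B$ is the upper triangular Borel subgroup, since each $g_k$ is upper triangular. It has determinant $2^{2n}$. Its diagonal part is $\mathrm{diag}(2^{a},2^{b},2^{c})$ with $a+b+c=2n$ and each exponent at most $n$. This holds because each step multiplies the exponent vector by a permutation-type vector in $\{(1,1,0),(1,0,1)\}$, and in the product the first coordinate always increases by one.

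The key step is to normalize the products $\gamma g_k$. Writing $\gamma=d u$ with $d$ diagonal and $u$ unipotent upper triangular, we get $\gamma g_k=d u d_k u_k=(dd_k)(d_k^{-1}ud_k)u_k$. This is again of the form (diagonal)$\cdot$(unipotent), and we reduce the unipotent part modulo $N(\Qq_2)\cap K^{\GL_3}=N(\Zz_2)$. So $T^{n+1}v$ is a sum over diagonals $\mathrm{diag}(2^{n+1},2^{n+1-i},2^i)$ of unipotent elements $u$ in $N(\Qq_2)/N(\Zz_2)$, each counted with some multiplicity.

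The main obstacle is to show that these multiplicities depend only on $(n,i,j)$ and that the resulting multiset of unipotents is exactly the parametrized family in the statement. Here $j$ is a parameter controlling the denominator of the $(2,3)$ entry beyond the minimal one. To do this, I would use the action of the torus-normalizing group. Conjugation by $N(\Zz_2)$, together with the freedom $u\mapsto u n$, makes the multiplicity function invariant under the translations that the $x,y,z$-parametrization encodes. Concretely, for a fixed diagonal $\mathrm{diag}(2^n,2^{n-i},2^i)$, the classes of $u$ in $N(\Qq_2)/N(\Zz_2)$ that occur are invariant under left multiplication by $d^{-1}N(\Zz_2)d\cap N(\Zz_2)\cdots$. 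This is because the full Hecke algebra element (a double-coset sum) is left $K$-invariant.

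So I would argue as follows. The element $T^n$ of the Iwahori–Hecke-type algebra is a combination of double cosets $K\,t\,u\,K$. Each $K$-double coset contained in $K\,\mathrm{diag}(2^n,2^{n-i},2^i)N(\Qq_2)K$ decomposes into left cosets whose unipotent parts are exactly the sets indexed by $x,y,z$ with a fixed $j$. Here $j$ is the valuation invariant of the $(2,3)$ entry, which determines the double coset, and the double-coset decomposition is computed as in the $n=1$ case. The multiplicity of each double coset in $T^n$ is then a constant $a_{n,i,j}$. Finally we verify the range $0\le j\le\min(i,n-i)$: when $j$ exceeds either bound, the corresponding entries become integral and the coset coincides with one having a smaller $j$.
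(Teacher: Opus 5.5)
Your base case is correct, and so is your reduction to counting classes $u\in N(\Qq_2)/N(\Zz_2)$ over each diagonal $d_{n,i}=\mathrm{diag}(2^n,2^{n-i},2^i)$. You also correctly identify where the real content of the lemma lies. The gap is that the step you use to settle it is false, and nothing else in the proposal proves it.

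Write $S(n,i,j)$ for the collection of left cosets $d_{n,i}u(x,y,z)K^{\GL_3}$ in the inner sum.
\begin{itemize}
\item These families are nested: for $j<\min(i,n-i)$, the terms of $S(n,i,j)$ with $z$ even are exactly $S(n,i,j+1)$. So they cannot be the slices of distinct, hence disjoint, double cosets.
\item $K^{\GL_3}$ contains $\GL_2(\Zz_2)$ acting on the last two coordinates, so double cosets mix different diagonals.
\item Already for $n=2$ one finds $T^2v=S(2,0,0)+S(2,1,0)+2S(2,1,1)+S(2,2,0)$. Here $K^{\GL_3}\mathrm{diag}(4,4,1)K^{\GL_3}$ consists of the $24$ left cosets in $S(2,0,0)$, $S(2,2,0)$ and the $z$-odd half of $S(2,1,0)$. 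And $K^{\GL_3}\mathrm{diag}(4,2,2)K^{\GL_3}$ is the $4$ cosets of $S(2,1,1)$, occurring with multiplicity $3$.
\end{itemize}
So your claim that each double coset decomposes into one family with fixed $j$, with $j$ determining the double coset, is wrong. The phrase ``computed as in the $n=1$ case'' is standing in for the entire induction step.

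What is needed is either an explicit computation or a correct invariance argument. The paper does the former. It multiplies $d_{n,i}u(x,y,z)$ on the left by the six representatives and re-parametrizes, which gives
\begin{itemize}
\item $TS(n,i,j)=2S(n+1,i,j+1)+S(n+1,i+1,j)$ for $i>j$,
\item $TS(n,i,j)=S(n+1,i,j)+S(n+1,i+1,j)$ for $i=j$.
\end{itemize}
The term $2^{j-n}xz$ in the $(1,3)$ entry is exactly what lets the overflow $z\mapsto z+2^{n-i-j}$ be absorbed by right multiplication by $N(\Zz_2)$.

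Your invariance idea could replace this, but not with the group you wrote. The group $d^{-1}N(\Zz_2)d\cap N(\Zz_2)$ contains only integral translations, whose orbits are far smaller than the families. What left $B(\Zz_2)$-invariance of $T^n$ actually gives is the following. Write $u_{12}=X$, $u_{13}=Y$, $u_{23}=Z$. Then the multiplicity of $u$ is invariant under $u\mapsto t_0n'ut_0^{-1}$, with $t_0$ a diagonal matrix in $\GL_3(\Zz_2)$ and $n'\in d_{n,i}^{-1}N(\Zz_2)d_{n,i}$. To finish along these lines you would also need two further ingredients.
\begin{itemize}
\item Support bounds: $X\in2^{-i}\Zz_2$, $Z\in2^{i-n}\Zz_2$ and $Y-XZ\in2^{i-n}\Zz_2$. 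These follow because, for a product $\gamma$ of $n$ representatives, the upper-left $2\times2$ block is $2^{n-i}$ times an integral matrix and $2^n\gamma^{-1}$ is integral.
\item An orbit computation. With $Z$ taken modulo $\Zz_2$, the orbits are exactly the shells where the $2$-adic valuation of $Z$ equals $i+j-n$ for $0\le j<\min(i,n-i)$, together with the set where it is at least $\min(0,2i-n)$.
\end{itemize}
The orbit computation is what produces the range of $j$, and the $a_{n,i,j}$ then come out by telescoping. None of these ingredients, including the correct invariance group, appears in your proposal.
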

\begin{proof}
We prove the claim by induction. When $n=1$, then
{\small
\begin{align*}
Tv &= \sum_{x,y\in \{0,1\}} \Pi(\left(\begin{matrix}2&&x\\&2&y\\&&1\end{matrix}\right))v + \sum_{x\in\{0,1\}}\Pi(\left(\begin{matrix}2&x&\\&1&\\&&2\end{matrix}\right))v\\
&= \Pi(\left(\begin{matrix}2&&\\&2&\\&&1\end{matrix}\right))\sum_{\substack{ 0\le y<2\\ 0\le z< 2}}\Pi(\left(\begin{matrix}1&&2^{-1}y\\&1&2^{-1}z\\&&1\end{matrix}\right))v + \Pi(\left(\begin{matrix}2&&\\&1&\\&&2\end{matrix}\right))\sum_{\substack{ 0\le x < 2}}\Pi(\left(\begin{matrix}1&2^{-1}x&\\&1&\\&&1\end{matrix}\right))v.
\end{align*}
}
For integers $i$, $j$, $x$, $y$ and $z$ with $0\le i \le n$, $0\le j \le i$, $0\le x < 2^i$, $0\le y < 2^{n-i}$, $0\le z < 2^{n-i+j}$, and integers $0\le y'$, $z'\le1$, we have
\begin{align*}
&\left(\begin{matrix}2&&y'\\&2&z'\\&&1\end{matrix}\right) \cdot \left(\begin{matrix}2^{n}&&\\&2^{n-i}&\\&&2^i\end{matrix}\right)\left(\begin{matrix}1&2^{-i}x&2^{i-n}y+2^{j-n}xz\\&1&2^{i+j-n}z\\&&1\end{matrix}\right)\\
&=\left(\begin{matrix}2^{n+1}&&\\&2^{n+1-i}&\\&&2^i\end{matrix}\right)\left(\begin{matrix}1&2^{-i}x &2^{i-(n+1)}\left(2y + y'-xz'\right)+2^{j-(n+1)}x\left(2z+2^{i-j}z'\right) \\&1&2^{i+j-(n+1)}\left(2z+2^{i-j}z'\right)\\&&1\end{matrix}\right).
\end{align*}

Also, for integers $i$, $j$, $x$, $y$ and $z$ with $0\le i \le n$, $0\le j \le i$, $0\le x < 2^i$, $0\le y < 2^{n-i}$, $0\le z < 2^{n-i-j}$, and integers $0\le x'\le1$, we have
\begin{align*}
&\left(\begin{matrix}2&x'&\\&1&\\&&2\end{matrix}\right) \cdot \left(\begin{matrix}2^{n}&&\\&2^{n-i}&\\&&2^i\end{matrix}\right)\left(\begin{matrix}1&2^{-i}x&2^{i-n}y+2^{j-n}xz\\&1&2^{i+j-n}z\\&&1\end{matrix}\right)\\
&=\left(\begin{matrix}2^{n+1}&&\\&2^{n-i}&\\&&2^{i+1}\end{matrix}\right)\left(\begin{matrix}1&2^{-i-1}(2x + x') &2^{i-n}y+ 2^{j-(n+1)}(2x+x')z\\&1&2^{i+j-n}z\\&&1\end{matrix}\right).
\end{align*}
Let us denote 
\[
S(n,i,j) = \sum_{\substack{ 0\le x < 2^i\\0\le y<2^{n-i}\\ 0\le z< 2^{n-i-j}}}\Pi(\left(\begin{matrix}2^{n}&&\\&2^{n-i}&\\&&2^i\end{matrix}\right)\left(\begin{matrix}1&2^{-i}x&2^{i-n}y+2^{j-n}xz\\&1&2^{i+j-n}z\\&&1\end{matrix}\right))v.
\]
Then, when $i>j$, we obtain
{\tiny
\begin{align*}
TS(n,i,j) &= \sum_{y', z' \in \{0,1\}} \sum_{\substack{ 0\le x < 2^i\\0\le y<2^{n-i}\\ 0\le z< 2^{n-i-j}}}\Pi(\left(\begin{matrix}2^{n+1}&&\\&2^{n+1-i}&\\&&2^i\end{matrix}\right)\left(\begin{matrix}1&2^{-i}x &\frac{2^{i}\left(2y + y'-xz'\right)+2^{j+1}x\left(z+2^{i-j-1}z'\right)}{2^{n+1}} \\&1&2^{i+j-n}\left(z+2^{i-j-1}z'\right)\\&&1\end{matrix}\right))v\\
&+ \sum_{x' \in \{0,1\}} \sum_{\substack{ 0\le x < 2^i\\0\le y<2^{n-i}\\ 0\le z< 2^{n-i-j}}}\Pi(\left(\begin{matrix}2^{n+1}&&\\&2^{n-i}&\\&&2^{i+1}\end{matrix}\right)\left(\begin{matrix}1&2^{-i-1}(2x + x') &\frac{2^{i+1}y+ 2^{j}(2x+x')z}{2^{n+1}}\\&1&2^{i+j-n}z\\&&1\end{matrix}\right))v\\
&= 2S(n+1, i, j+1) + S(n+1, i+1, j),
\end{align*}
}
and when $i=j$, we have
{\tiny
\begin{align*}
TS(n,i,j)  &= \sum_{y', z' \in \{0,1\}} \sum_{\substack{ 0\le x < 2^i\\0\le y<2^{n-i}\\ 0\le z< 2^{n-i-j}}}\Pi(\left(\begin{matrix}2^{n+1}&&\\&2^{n+1-i}&\\&&2^i\end{matrix}\right)\left(\begin{matrix}1&2^{-i}x &\frac{2^{i}\left(2y + y'-xz'\right)+2^{j}x\left(2z+z'\right)}{2^{n+1}} \\&1&2^{i+j-n-1}\left(2z+z'\right)\\&&1\end{matrix}\right))v\\
&+ \sum_{x' \in \{0,1\}} \sum_{\substack{ 0\le x < 2^i\\0\le y<2^{n-i}\\ 0\le z< 2^{n-i-j}}}\Pi(\left(\begin{matrix}2^{n+1}&&\\&2^{n-i}&\\&&2^{i+1}\end{matrix}\right)\left(\begin{matrix}1&2^{-i-1}(2x + x') &\frac{2^{i+1}y+ 2^{j}(2x+x')z}{2^{n+1}}\\&1&2^{i+j-n}z\\&&1\end{matrix}\right))v\\
&= S(n+1, i, j) + S(n+1, i+1, j).
\end{align*}

}

Both are in the form required by the lemma, so we can express $T^n$ as in this lemma.
\end{proof}

\begin{prop}\label{prop32}
	The representation $\Pi$ is not supercuspidal.
\end{prop}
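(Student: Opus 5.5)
We argue by contradiction: assume $\Pi$ is supercuspidal, and show that then $T^nv=0$ for all sufficiently large $n$. Since $Tv=2\sqrt{-1}v$ by Equation (\ref{hecke1}), we have $T^nv=(2\sqrt{-1})^nv\neq 0$ for every $n$, which is the desired contradiction. The vanishing of $T^nv$ will come from Lemma \ref{lemma31}, which writes $T^nv$ as a sum of translates of partial integrals of $v$ over unipotent elements, together with the compact support of matrix coefficients of a supercuspidal representation.

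First, pair with the contragredient. Fix an arbitrary $\tilde v\in\tilde\Pi$ and consider $\langle T^n v,\tilde v\rangle$. By Lemma \ref{lemma31} this is a finite sum of terms $a_{n,i,j}\langle \Pi(d_{n,i}u)v,\tilde v\rangle$. Here $d_{n,i}=\mathrm{diag}(2^n,2^{n-i},2^i)$, and $u$ ranges over the unipotent upper triangular matrices appearing in the lemma. The entries of these $u$ lie in $2^{-n}\Zz_2$, so each $u$ lies in the compact set $N_n=N(2^{-n}\Zz_2)$.

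Next, modulo the centre each $d_{n,i}$ is equivalent to $\mathrm{diag}(2^{n-i},2^{-i},1)$. The element $d_{n,i}$ acts on $\Pi$ through the central character (which is trivial on $2$ by Equation (\ref{centchar}), though only unitarity matters here). For $n$ large, and for every $0\le i\le n$, at least one of the exponent gaps $n-i$ or $i$ is at least $n/2$. So $d_{n,i}$ modulo the centre leaves every fixed compact subset of $\GL_3(\Qq_2)/Z$. Since $\Pi$ is supercuspidal, the matrix coefficient $g\mapsto\langle\Pi(g)v,\tilde v\rangle$ is compactly supported modulo $Z$, and we want to conclude that it vanishes at $d_{n,i}u$.

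The main obstacle is that $u$ is not bounded uniformly in $n$: its entries can have denominators $2^{n}$. So the naive argument "$d_{n,i}u$ leaves compacta" has to be checked with care, because the product $d_{n,i}u$ has entries $2^n\cdot 2^{-i}x$ and similar ones. I would handle this by computing the Cartan ($KAK$) decomposition of $d_{n,i}u$ directly.

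Write $d_{n,i}u$ as an integral upper triangular matrix. Its diagonal is $(2^n,2^{n-i},2^i)$, and the $(1,1)$ entry $2^n$ divides the first row entries only up to the given powers. Its elementary divisors are determined by the gcd of the entries and the gcd of the $2\times2$ minors. The gcd of the entries is bounded by $\min(2^{n-i},2^i)$ times a unit, while the determinant is $2^{2n}$, so the ratio between the largest and smallest elementary divisors tends to infinity uniformly in $u$.

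Alternatively, I can avoid the Cartan computation. Choose $\tilde v$ fixed by a small open compact subgroup, and use that in Lemma \ref{lemma31} one may instead factor $d_{n,i}u=u'd_{n,i}$ with $u'$ having entries in $\Zz_2$, which is bounded. The compact support of matrix coefficients then applies directly to $u'd_{n,i}$ ranging over $N(\Zz_2)\,d_{n,i}$. This set leaves every compact set modulo $Z$ as $n\to\infty$.

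Hence $\langle T^nv,\tilde v\rangle=0$ for all $\tilde v$ once $n\gg0$, so $T^nv=0$. This contradicts $T^nv=(2\sqrt{-1})^nv\neq0$, and therefore $\Pi$ is not supercuspidal.
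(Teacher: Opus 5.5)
Your main route is correct, and it differs from the paper's. The paper works with the vector $v$ itself:
\begin{enumerate}
\item From the vanishing of the Jacquet modules for both maximal parabolics it extracts bounds $M$, $M'$.
\item It splits the terms $S(n,i,j)$ of Lemma~\ref{lemma31} according to whether $n-i-j>M$.
\item The first group dies under the average over the radical in positions $(1,3),(2,3)$.
\item The second group dies under the average over the first-row radical, after an extra average over the $(1,3)$-entry (which fixes $T^nv$).
\end{enumerate}
This uses the fine shape of the unipotent parts in Lemma~\ref{lemma31}.

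You instead use compact support of matrix coefficients modulo the centre. You only need the coarse fact that each term is $\Pi(g)v$ with
\[
g=d_{n,i}u=\left(\begin{matrix}2^n&2^{n-i}x&2^iy+2^jxz\\&2^{n-i}&2^jz\\&&2^i\end{matrix}\right)\in\MM_3(\Zz_2).
\]
Write its elementary divisors as $2^{e_1},2^{e_2},2^{e_3}$ with $e_1\le e_2\le e_3$. Then $e_1\le\min(i,n-i)\le n/2$, since the gcd of the entries divides the diagonal entries. Also $e_3\ge 2n/3$, since $e_1+e_2+e_3=2n$. Hence $e_3-e_1\ge n/6$, whereas $e_3-e_1$ is bounded on $ZC$ for every compact $C$. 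State these two inequalities explicitly; they are the whole content of your ``ratio tends to infinity uniformly in $u$''.

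Your route is shorter, and in fact does not need Lemma~\ref{lemma31}. Since each $T^mv$ is $K^{\GL_3}$-fixed, $T^nv=\sum\Pi(g_{k_1}\cdots g_{k_n})v$ over products of the six coset representatives of $T$. Every such product is integral upper triangular with diagonal $(2^n,2^a,2^b)$, $a+b=n$, so the same bound applies. The paper's route avoids the contragredient and the Cartan decomposition, at the price of heavier bookkeeping.

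Three corrections are needed.

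First, drop the ``alternative''. In general $d_{n,i}u\neq u'd_{n,i}$ with $u'\in N(\Zz_2)$: the $(2,3)$-entry of $d_{n,i}ud_{n,i}^{-1}$ is $2^{j-i}z$, which is non-integral when $j<i$ and $z$ is odd. For example, $\mathrm{diag}(2,1,2)\cdot\left(\begin{smallmatrix}2&0&0\\0&2&1\\0&0&1\end{smallmatrix}\right)=\left(\begin{smallmatrix}4&0&0\\0&2&1\\0&0&2\end{smallmatrix}\right)$ occurs in $T^2v$. It equals $d_{2,1}u$, yet its elementary divisors are $1,4,4$ rather than the $2,2,4$ of $d_{2,1}$. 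This change of Cartan component is exactly why the elementary-divisor estimate is needed.

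Second, $d_{n,i}$ modulo the centre is $\mathrm{diag}(2^{n-i},2^{n-2i},1)$, not $\mathrm{diag}(2^{n-i},2^{-i},1)$. This is harmless, since that paragraph is superseded.

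Third, your vanishing $\langle T^nv,\tilde v\rangle=0$ holds only for $n\ge n_0(\tilde v)$, so ``for all $\tilde v$ once $n\gg0$'' is not what you proved. Fix one $\tilde v$ with $\langle v,\tilde v\rangle\neq0$ and compare with $\langle T^nv,\tilde v\rangle=(2\sqrt{-1})^n\langle v,\tilde v\rangle$.
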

\begin{proof}

	Suppose that $\Pi$ is supercuspidal.
	Then, since the Jacquet modules of $\Pi$ are zero, there exists a positive integer $M$ such that for any integer $m> M$, 
	\begin{align*}
		\sum_{0\le x, y <2^{m}}\Pi(\left(\begin{matrix}1&&2^{-m}x\\&1&2^{-m}y\\&&1\end{matrix}\right))v = 0.
	\end{align*}
	Also, there exists a positive integer $M'$ such that for any integer $m'> M'$ and for any integer $m \le M$, 
	\begin{align*}
		\sum_{0\le x ,y<2^{m'}}\Pi(\left(\begin{matrix}1&2^{-m'}x&2^{-m'}y\\&1&\\&&1\end{matrix}\right))\sum_{0\le z <2^{m}}\Pi(\left(\begin{matrix}1&&\\&1&2^{-m}z\\&&1\end{matrix}\right))v = 0.
	\end{align*}
	
Let $n$ be an integer which satisfies $n > 2(M + M')$. For integers $0\le i \le n$, $0\le j \le \min(i, n-i)$ with $n-i-j > M$, we have
{\small
\begin{align*}
S(n,i,j)&=\Pi(\left(\begin{matrix}2^n&&\\&2^i&\\&&2^{n-i}\end{matrix}\right))\sum_{\substack{ 0\le x < 2^i\\0\le y<2^{n-i}\\ 0\le z< 2^{n-i-j}}}\Pi(\left(\begin{matrix}1&2^{-i}x&2^{i-n}y+2^{j-n}xz\\&1&2^{i+j-n}z\\&&1\end{matrix}\right))v\\
&=\sum_{\substack{ 0\le x < 2^i\\0\le y '<2^{j}}}\Pi(\left(\begin{matrix}2^n&2^{n-i}x&2^{i}y'\\&2^i&\\&&2^{n-i}\end{matrix}\right))\sum_{\substack{0\le y <2^{n-i-j}\\0\le z<2^{n-i-j}}}\Pi(\left(\begin{matrix}1&&2^{i+j-n}y\\&1&2^{i+j-n}z\\&&1\end{matrix}\right))v\\
&=0.
\end{align*}
}
For integers $0\le i \le n$, $0\le j \le \min(i, n-i)$ with $n-i-j \le M$, we have 
\begin{align*}
i &\ge n-j-M\\
& \ge n- \min(i, n-i) - M \\
&\ge n-\frac{n}{2} -M \\
&> (M+M') -M = M'.
\end{align*}
Therefore, we have
{\small
\begin{align*}
&\sum_{x'=0}^{2^n-1}\Pi(\left(\begin{matrix}1&&x'\\&1&\\&&1\end{matrix}\right))S(n,i,j)\\
&=\sum_{x'=0}^{2^n-1}\Pi(\left(\begin{matrix}1&&x'\\&1&\\&&1\end{matrix}\right))\Pi(\left(\begin{matrix}2^n&&\\&2^i&\\&&2^{n-i}\end{matrix}\right))\sum_{\substack{ 0\le x < 2^i\\0\le y<2^{n-i}\\ 0\le z< 2^{n-i-j}}}\Pi(\left(\begin{matrix}1&2^{-i}x&2^{i-n}y+2^{j-n}xz\\&1&2^{i+j-n}z\\&&1\end{matrix}\right))v\\
&=\sum_{ \substack{0 \le x'' < 2^{n-i}\\0\le y <2^{n-i}}}\Pi(\left(\begin{matrix}2^n&&2^{i}y + 2^nx''\\&2^i&\\&&2^{n-i}\end{matrix}\right))\\
&\cdot\sum_{0\le x , x'< 2^i}\Pi(\left(\begin{matrix}1&2^{-i}x&2^{-i}x'\\&1&\\&&1\end{matrix}\right))\sum_{0\le z<2^{n-i-j}}\Pi(\left(\begin{matrix}1&&\\&1&2^{i+j-n}z\\&&1\end{matrix}\right))v\\
&=0. 
\end{align*}
}
Therefore, by Lemma \ref{lemma31} and Equation (\ref{hecke1}), we have
\begin{align*}
2^n (2\sqrt{-1})^nv &= \sum_{x'=0}^{2^n-1}\Pi(\left(\begin{matrix}1&&x'\\&1&\\&&1\end{matrix}\right))T^nv\\
 &= \sum_{x'=0}^{2^n-1}\Pi(\left(\begin{matrix}1&&x'\\&1&\\&&1\end{matrix}\right))\sum_{i=0}^n\sum_{j=0}^{\min(i, n-i)} a_{n,i,j}S(n,i,j)\\
 &=\sum_{\substack{0\le i \le n \\ 0\le j \le \min(i, n-i)\\n-i-j > M}}  a_{n,i,j}\sum_{x'=0}^{2^n-1}\Pi(\left(\begin{matrix}1&&x'\\&1&\\&&1\end{matrix}\right))S(n,i,j) \\
 &+ \sum_{\substack{0\le i \le n \\ 0\le j \le \min(i, n-i)\\n-i-j \le M}}  a_{n,i,j}\sum_{x'=0}^{2^n-1}\Pi(\left(\begin{matrix}1&&x'\\&1&\\&&1\end{matrix}\right))S(n,i,j) \\
 &= 0,
 \end{align*}
 which is a contradiction. So $\Pi$ is not supercuspidal. 
\end{proof}

For a character $\chi$ of $\Qq_2^\times$, we define $\cond(\chi) = \min\{n \in \Zz_{\ge0} \mid \chi(1+2^nx ) = 1$ for all $x\in\Zz_2\}$. Also we define the subgroup $K'_n$ of $\GL_2(\Zz_2)$ for $n\in\Zz_{\ge 0}$ by 
\begin{align*}
K'_n = \{\left(\begin{matrix}a&b\\c&d\end{matrix}\right) \in \GL_2(\Zz_2) \mid c\equiv 0, \,d\equiv 1 \bmod 2^n\}.
\end{align*}
For an irreducible infinite-dimensional representation $\pi$ of $\GL_2(\Qq_2)$, we define 
\begin{align*}
\cond(\pi) = \min\{n\in\Zz_{\ge 0} \mid \pi^{K'_n} \not =0\}.
\end{align*}
By \cite[Theorem $1$]{MR337789}, this is well-defined.

\begin{prop}\label{prop33} There exist an irreducible supercuspidal representation $\pi$ of $\GL_2(\Qq_2)$ with $\cond(\pi) = 7$ and an unramified character $\chi$ of $\Qq_2^\times$ with $\chi(2) = -2\sqrt{-1}$ such that
\begin{align*}
\Pi = \Ind_P^{\GL_3}(\pi\boxtimes \chi).
\end{align*}
Here  $\Ind$ is the unnormalized induction and $P$ is a parabolic subgroup of $\GL_3$ of the following form:
\[
	P = \left\{(a_{ij})_{i,j}\in\GL_3\mid a_{31} = a_{32} = 0\right\}.
\]
\end{prop}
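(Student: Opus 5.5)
By Proposition \ref{prop32}, $\Pi$ is not supercuspidal. It is also generic, being the local component of a cuspidal automorphic representation of $\GL_3$. The plan is to run through the Bernstein--Zelevinsky classification of irreducible generic non-supercuspidal representations of $\GL_3(\Qq_2)$ and eliminate every case except $\Ind_P^{\GL_3}(\pi\boxtimes\chi)$ with $\pi$ supercuspidal. In that case $\Ind_P^{\GL_3}(\pi\boxtimes\chi)$ is automatically irreducible and generic. The remaining cases are the ones whose cuspidal support consists of three characters $\chi_1,\chi_2,\chi_3$ of $\Qq_2^\times$:
\begin{itemize}
\item irreducible principal series;
\item $\Ind(\mathrm{St}\otimes\chi_1\boxtimes\chi_2)$;
\item twists of the Steinberg representation.
\end{itemize}
The inputs are the following facts about $\Pi$:
\begin{itemize}
\item $\dim\Pi^{K^{\GL_3}}=1$ and $\Pi^{K'^{\GL_3}}=0$ (Proposition \ref{prop22}), so the Jacquet--Piatetski-Shapiro--Shalika conductor of $\Pi$ is exactly $7$;
\item the central character is trivial, by (\ref{centchar});
\item the eigenvalues in (\ref{hecke1}) and (\ref{hecke2}).
\end{itemize}

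\textbf{First step.} I would extract the unramified character from (\ref{hecke1}), reusing the computation of Proposition \ref{prop32}. Apply $\sum_{x'}\Pi\bigl(\begin{smallmatrix}1&&x'\\&1&\\&&1\end{smallmatrix}\bigr)$ to $T^nv=(2\sqrt{-1})^nv$ and pass to the Jacquet module with respect to the unipotent radical of $P$. In the Jacquet module the terms $S(n,i,j)$ collapse, and only the component on which $\mathrm{diag}(1,1,2)$ acts through a character $\chi$ survives. That component has eigenvalue $\chi(2)$ up to a power of $2$ coming from the modulus character. Since $(2\sqrt{-1})^n\neq0$ for all $n$, the image of $v$ in $r_P(\Pi)$ is nonzero. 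It then lies in a $\GL_1$-isotypic piece on which the last coordinate acts unramifiedly, with $\chi(2)$ determined by $2\sqrt{-1}$. After tracking the normalization of the unnormalized induction, this should give $\chi(2)=-2\sqrt{-1}$.

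\textbf{Second step.} I would rule out the character cases. For these, $K^{\GL_3}$-newforms are known explicitly, and the conductor is additive: $\cond(\Pi)=\sum\cond(\chi_i)$, with the appropriate correction for the Steinberg pieces. The Hecke eigenvalue of $\mathrm{diag}(2,1,1)$ on the newvector is then an elementary symmetric expression in the values $\chi_i(2)$ of the unramified characters among the $\chi_i$. The goal is to show that the constraints are inconsistent:
\begin{itemize}
\item $\cond=7$;
\item trivial central character;
\item eigenvalue $0$ for $\mathrm{diag}(2,1,1)$;
\item eigenvalue $2\sqrt{-1}$ for $\mathrm{diag}(2,2,1)$.
\end{itemize}
For example, if exactly one $\chi_i$ is unramified, then $\mathrm{diag}(2,2,1)$ should act by a product forcing $|\chi_i(2)|$ to have the wrong size. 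If two or more are unramified, $T_{\mathrm{diag}(2,1,1)}$ should be a nonzero sum of at least one term of absolute value $2\cdot 2^{\pm1/2}$-type.

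\textbf{Third step.} In the remaining case $\Pi=\Ind_P^{\GL_3}(\pi\boxtimes\chi)$, additivity of conductors gives $\cond(\pi)+\cond(\chi)=7$, and $\chi$ is unramified by the first step, so $\cond(\pi)=7$. A subtlety is that the Levi ordering could a priori be $\chi\boxtimes\pi$ instead. I would fix the ordering by the Jacquet-module computation of the first step, or by noting that the two orderings give isomorphic irreducible inductions, so either form can be written as $\Ind_P(\pi\boxtimes\chi)$.

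\textbf{Main obstacle.} I expect the main obstacle to be the second step. It requires an explicit description of the $K^{\GL_3}$-fixed vectors of induced representations with ramified inducing data, enough to compute the eigenvalues of $\mathrm{diag}(2,1,1)$ and $\mathrm{diag}(2,2,1)$ in each character case. I would first try to reduce these computations to the double-coset decomposition already written before Lemma \ref{lemma31}, evaluated on the newvector's restriction to the Borel subgroup.
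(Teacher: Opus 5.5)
There is a genuine gap in your first step, and it sits exactly where the paper does most of its work. You claim that the image $\bar v$ of $v$ in the Jacquet module $\Pi_N=r_P(\Pi)$ is nonzero ``since $(2\sqrt{-1})^n\neq 0$''. That does not follow. If you project $\sum_{x'}\Pi\bigl(\begin{smallmatrix}1&&x'\\&1&\\&&1\end{smallmatrix}\bigr)T^nv=2^n(2\sqrt{-1})^nv$ to $\Pi_N$, you get an identity between multiples of $\bar v$, and it holds trivially when $\bar v=0$. The computation in Proposition \ref{prop32} reaches a contradiction only by assuming that \emph{both} the $N_P$-coinvariants (last column) and the coinvariants for the first-row unipotent radical kill the relevant vectors. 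So it never shows that $v$ survives in $r_P(\Pi)$ specifically.

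In the induced model, $\bar v=0$ means $f(1)=0$, with $f$ supported on the other cells $Ps_2K^{\GL_3}$ and $PM_iK^{\GL_3}$. This is forced whenever $\chi$ is ramified, because $\mathrm{diag}(1,1,u)\in K^{\GL_3}\cap P$ for every $u\in\Zz_2^\times$. Conductor additivity alone does not exclude, e.g., $\cond(\pi)=5$, $\cond(\chi)=2$. Hence ``$\chi$ is unramified by the first step'' in Step 3 is unsupported, and so are $\cond(\pi)=7$ and $\chi(2)=-2\sqrt{-1}$.

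The paper fills exactly this hole:
\begin{enumerate}
\item It determines the representatives $1,s_2,M_1,\dots,M_6$ of $P(\Qq_2)\backslash\GL_3(\Qq_2)/K^{\GL_3}$.
\item It kills $f(s_2)$ using (\ref{hecke3}).
\item It kills each $f(M_i)$ by a case analysis combining (\ref{hecke1}), (\ref{hecke2}) and (\ref{hecke3}) with conductor and central-character bounds.
\item Only then does it conclude $f(1)\neq 0$, hence $\chi$ unramified.
\end{enumerate}
A possible alternative is to apply the newform Hecke-eigenvalue theory you already invoke in Step 2 to the case $\Ind_P(\pi\boxtimes\chi)$ as well. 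Then the nonzero eigenvalue in (\ref{hecke1}) would force $L(s,\Pi)=L(s,\chi)\neq 1$. This needs a precise reference, and a careful matching of the first-column group $K^{\GL_3}$ and its Hecke operators with the JPSS setting.

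Two smaller points. First, even granting $\bar v\neq0$, (\ref{hecke1}) alone does not determine $\chi(2)$. The cosets $\bigl(\begin{smallmatrix}2&x&\\&1&\\&&2\end{smallmatrix}\bigr)$ lie in the Levi, so $\overline{Tv}=4\,\Pi_N(\mathrm{diag}(2,2,1))\bar v+\Pi_N(\mathrm{diag}(1,1,2))\,U\bar v$. Here $U=\sum_x\Pi_N\bigl(\begin{smallmatrix}2&x\\&1\end{smallmatrix}\bigr)$ acts on the $\GL_2$ factor. You must bring in (\ref{hecke2}): its cosets all lie in $P$ and project to $2U\bar v$, so $U\bar v=0$. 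With the trivial central character this gives $\chi(2)=-2\sqrt{-1}$, a nice shortcut compared with the paper once $\bar v\neq0$ is known.

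Second, in Step 2 the ``wrong size'' mechanism fails. Newform eigenvalues depend only on $L(s,\Pi)$, which has the same shape $L(s,\mu)$, with $\mu$ unitary unramified, for a tempered principal series with one unramified character and for $\Ind_P(\pi\boxtimes\chi)$. Absolute values therefore cannot separate these cases. What actually eliminates all character cases is parity:
\begin{itemize}
\item No ramified character of $\Qq_2^\times$ has conductor $1$.
\item For a principal series, a trivial central character forces the largest conductor among $\mu_1,\mu_2,\mu_3$ to occur twice, so the sum cannot be $7$.
\item The same bookkeeping with $\cond(\mathrm{St}_2\otimes\mu)\in\{1,2\cond(\mu)\}$ and $\cond(\mathrm{St}_3\otimes\mu)\in\{2,3\cond(\mu)\}$ rules out the Steinberg-type cases.
\end{itemize}
So Step 2 is not the real obstacle; Step 1 is.
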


\begin{proof}
From Proposition \ref{prop32}, there exist an irreducible infinite-dimensional representation $\pi$ of $\GL_2(\Qq_2)$ and a character $\chi$ of $\Qq_2^\times$ such that $\Pi$ is a subrepresentation of a representation $\Ind_P^{\GL_3}(\pi\boxtimes \chi)$.
 
We first determine a set of representatives for $P(\Qq_2)\backslash \GL_3(\Qq_2) / K^{\GL_3}$. It is clear that
\begin{align*}
P(\Qq_2)\backslash \GL_3(\Qq_2) / K^{\GL_3}&\cong P(\Zz_2)\backslash \GL_3(\Zz_2) / K^{\GL_3}\\
&\cong P(\Zz/2^7)\backslash \GL_3(\Zz/2^7) / Q(\Zz/2^7),\\
\end{align*}
where $Q = \{(a_{ij})_{i,j}\mid a_{21} = a_{31} = 0\}$.
Let us define
\begin{align*}
 K_1^{\GL_3} &:= \{(a_{ij})_{i,j} \in \GL_3(\Zz_2)\mid a_{21} \equiv a_{31} \equiv 0 \bmod 2\},\\
 s_1 &:=\left(\begin{matrix}0&1&0\\1&0&0\\0&0&1\end{matrix}\right), s_2 :=\left(\begin{matrix}0&0&1\\0&1&0\\1&0&0\end{matrix}\right).
\end{align*}
Let $B$ denote the Borel subgroup of upper-triangular matrices of $\GL_3$. Then by the Bruhat decomposition, we have
{\small
\begin{align*}
\GL_3(\Zz/2\Zz) &= Q(\Zz/2\Zz) \amalg B(\Zz/2\Zz)s_1Q(\Zz/2\Zz)\amalg B(\Zz/2\Zz)s_2Q(\Zz/2\Zz). 
\end{align*}
}
Also, we have
\begin{align*}
 B(\Zz/2\Zz)s_1Q(\Zz/2\Zz) &= \coprod_{x\in\{0,1\}}s_1\left(\begin{matrix}1&&\\x&1&\\&&1\end{matrix}\right)Q(\Zz/2\Zz),\\
  B(\Zz/2\Zz)s_2Q(\Zz/2\Zz)&= \coprod_{x,y\in\{0,1\}}s_2\left(\begin{matrix}1&&\\x&1&\\y&&1\end{matrix}\right)Q(\Zz/2\Zz).
 \end{align*}
 Hence we obtain the following complete system of representatives for $\GL_3(\Zz_2) / K^{\GL_3}_1 \cong \GL_3(\Zz/2\Zz) / Q(\Zz/2\Zz)$:
\[
1,\,s_1\left(\begin{matrix}1&&\\x&1&\\&&1\end{matrix}\right), \, s_2\left(\begin{matrix}1&&\\x&1&\\y&&1\end{matrix}\right), 
\]
where $x$, $y\in\{0,1\}$.
In addition, since
\[
K_1^{\GL_3}/K^{\GL_3} = \coprod_{\substack{x, y\in\Zz/2^7\Zz\\x, y \in 2\Zz}}\left(\begin{matrix}1&&\\x&1&\\y&&1\end{matrix}\right),
\]
a complete set of representatives for $\GL_3(\Zz_2) / K^{\GL_3}$ is given by
\begin{align*}
\left(\begin{matrix}1&&\\x&1&\\y&&1\end{matrix}\right), & &x, y\in2\Zz, 0\le x, y< 2^7,\\
s_1\left(\begin{matrix}1&&\\x&1&\\y&&1\end{matrix}\right), && x\in \Zz, y\in2\Zz, 0\le x, y< 2^7,\\
s_2\left(\begin{matrix}1&&\\x&1&\\y&&1\end{matrix}\right), && x, y\in\Zz, 0\le x, y< 2^7.
\end{align*}
Using the following matrix identities:
\begin{align*}
s_2\left(\begin{matrix}1&&\\x&1&\\y&&1\end{matrix}\right) &= \left(\begin{matrix}1&&y\\&1&x\\&&1\end{matrix}\right) s_2,\\
\left(\begin{matrix}1&&\\x&1&\\y&&1\end{matrix}\right) &= \left(\begin{matrix}1&&\\x&1&\\&&1\end{matrix}\right) \left(\begin{matrix}1&&\\&1&\\y&&1\end{matrix}\right),\\
\left(\begin{matrix}1&&\\&1&\\x\cdot 2^i&&1\end{matrix}\right) &= \left(\begin{matrix}x^{-1}&&\\&1&\\&&1\end{matrix}\right)\left(\begin{matrix}1&&\\&1&\\ 2^i&&1\end{matrix}\right)\left(\begin{matrix}x&&\\&1&\\&&1\end{matrix}\right) & x\in\Zz_2^\times, i \in \Zz, 1\le i\le 6,
\end{align*}
we obtain the following complete system of representatives for $P(\Zz_2) \backslash \GL_3(\Zz_2) / K^{\GL_3}$ : 
\[
1, s_2, M_i = \left(\begin{matrix}1&&\\&1&\\ 2^i&&1\end{matrix}\right), 1\le i\le 6.
\]

Let $f\in\Pi^{K^{\GL_3}} \subset \Ind_P^{\GL_3}(\pi\boxtimes \chi)^{K^{\GL_3}}$ be a nonzero vector.
First, by Equation (\ref{hecke3}) we have
\begin{align*}
 -f(s_2)&=T_{\left(\begin{smallmatrix}1&&\\64&1&\\&&1\end{smallmatrix}\right)}^\Pi f(s_2) \\
 &= \Pi( \left(\begin{matrix}1&&\\64&1&\\&&1\end{matrix}\right) )f(s_2) + \Pi( \left(\begin{matrix}1&&\\&1&\\64&&1\end{matrix}\right) )f(s_2)+ \Pi( \left(\begin{matrix}1&&\\64&1&\\64&&1\end{matrix}\right) )f(s_2)\\
&=f(\left(\begin{matrix}1&&\\&1&64\\&&1\end{matrix}\right)s_2) + f(\left(\begin{matrix}1&&64\\&1&\\&&1\end{matrix}\right)s_2) + f(\left(\begin{matrix}1&&64\\&1&64\\&&1\end{matrix}\right)s_2)\\
&=3f(s_2),
\end{align*}
so we have $f(s_2) = 0$.

By Equation (\ref{hecke2}), we have
\begin{align*}
0 &= T_{\left(\begin{smallmatrix}2&&\\&1&\\&&1\end{smallmatrix}\right)}^\Pi f(1) \\
&= \sum_{0\le j, k \le1}\Pi(\left(\begin{matrix}2&j&k\\&1&\\&&1\end{matrix}\right)) f(1) \\
&= \sum_{0\le j, k \le1}f(\left(\begin{matrix}2&j&k\\&1&\\&&1\end{matrix}\right))\\
&=2\sum_{0\le j \le1}\pi(\left(\begin{matrix}2&j\\&1\end{matrix}\right)) f(1),
\end{align*}
so we have
\begin{align*}
\sum_{0\le j \le1}\pi(\left(\begin{matrix}2&j\\&1\end{matrix}\right)) f(1) = 0. \stepcounter{equation}\tag{\theequation}\label{prop330}
\end{align*}

We show that $f(M_i) =0$ for $ 1 \le i \le 6$. Assume that there exists an integer $i_0$ with $1\le i_0\le 6$ such that $f(M_{i_0})\not=0$.
For $a_{ij} \in \Zz_2$, $1\le i$, $j\le 3$ with $a_{11}$, $a_{22} \in \Zz_2^\times$ we have
\begin{align*}
&\pi(\left(\begin{matrix}a_{11}-2^{i_0}a_{13}&a_{12}\\2^7a_{21}-2^{i_0}a_{23}&a_{22}\end{matrix}\right))\chi(a_{11}+2^{7-i_0}a_{31})f(M_{i_0})\\
&= f(\left(\begin{matrix}a_{11}-2^{i_0}a_{13}&a_{12}&a_{13}\\2^7a_{21}-2^{i_0}a_{23}&a_{22}&a_{23}\\0&0&a_{11}+2^{7-i_0}a_{31}\end{matrix}\right)M_{i_0})\\
&= \Pi(\left(\begin{matrix}a_{11}&a_{12}&a_{13}\\2^7a_{21}&a_{22}&a_{23}\\2^7a_{31}&-2^{i_0}a_{12}&a_{11}-2^{i_0}a_{13}+2^{7-i_0}a_{31}\end{matrix}\right))f(M_{i_0})\\
&=f(M_{i_0}).\stepcounter{equation}\tag{\theequation}\label{eq331}
\end{align*}
This implies that $f(M_{i_0}) \in \pi^{K'_{i_0}}$, $\cond(\pi) \le i_0$ and $\cond(\chi) \le 7-i_0$.

Assume that $i_0=6$. By Equation (\ref{hecke2}) and the inequalities $\cond(\pi) \le 6$ and $\cond(\chi)\le 1$, we have
\begin{align*}
0 &= T_{\left(\begin{smallmatrix}2&&\\&1&\\&&1\end{smallmatrix}\right)}^\Pi f(M_{5}) \\
&= \sum_{0\le j, k \le1}\Pi(\left(\begin{matrix}2&j&k\\&1&\\&&1\end{matrix}\right)) f(M_{5}) \\
&= \sum_{0\le j, k \le1}f(\left(\begin{matrix}2&j-\frac{2^{5}jk}{1+2^{5}k}&k\\&1&\\&&1+2^{5}k\end{matrix}\right)M_{6}\left(\begin{matrix}\frac{1}{1+2^{5}k}&&\\&1&\\&\frac{2^{5}j}{1+2^{5}k}&1\end{matrix}\right))\\
&=\sum_{0\le j \le1}\sum_{0\le k \le 1}\chi(1+2^{5}k)\pi(\left(\begin{matrix}2&j-\frac{2^{5}jk}{1+2^{5}k}\\&1\end{matrix}\right)) f(M_{6})\\
&=\sum_{0\le j \le1}\pi(\left(\begin{matrix}2&j\\&1\end{matrix}\right))\sum_{0\le k \le 1}\pi(\left(\begin{matrix}1&-\frac{2^{4}jk}{1+2^{5}k}\\&1\end{matrix}\right))\left( f(M_{6})\right)\\
&=2\sum_{0\le j \le1}\pi(\left(\begin{matrix}2&j\\&1\end{matrix}\right)) f(M_{6}).
\end{align*}
The last identity follows from $f(M_6) \in  \pi^{K'_{6}}$. 
Therefore we have $\sum_{0\le j \le1}\pi(\left(\begin{matrix}2&j\\&1\end{matrix}\right)) f(M_{6})=0$.

By Equation (\ref{hecke1}) and the inequality $\cond(\pi) \le 6$, we have 
\begin{align*}
&2\sqrt{-1}f(M_{6})\\
&=T_{\left(\begin{smallmatrix}2&&\\&2&\\&&1\end{smallmatrix}\right)}^\Pi f(M_{6}) \\
&=\sum_{0\le j, k \le1}\Pi(\left(\begin{matrix}2&&j\\&2&k\\&&1\end{matrix}\right)) f(M_{6}) + \sum_{0\le j \le1}\Pi(\left(\begin{matrix}2&j&\\&1&\\&&2\end{matrix}\right)) f(M_{6}) \\
&= \sum_{0\le j, k \le1}f(M_{6}\left(\begin{matrix}2&&j\\&2&k\\&&1\end{matrix}\right)) + \sum_{0\le j \le1}f(M_{6}\left(\begin{matrix}2&j&\\&1&\\&&2\end{matrix}\right))\\
&=\sum_{0\le j, k \le1}f(\left(\begin{matrix}\frac{2}{1+2^{6}j} &0&\frac{j}{1+2^{6}j}\\-\frac{2^{7}k}{1+2^{6}j}&2&\frac{k}{1+2^{6}j}\\0&0&1\end{matrix}\right)\left(\begin{matrix}1&&\\&1&\\2^7&&1+2^{6}j\end{matrix}\right)) \\
&+ \sum_{0\le j \le1}f(\left(\begin{matrix}2&j&\\&1&\\&&2\end{matrix}\right)M_{6}\left(\begin{matrix}1&&\\&1&\\&2^{5}j&1\end{matrix}\right))\\
&= 4\pi(\left(\begin{matrix}2&\\&2\end{matrix}\right))f(1) + \chi(2)\sum_{0\le j \le1}\pi(\left(\begin{matrix}2&j\\&1\end{matrix}\right)) f(M_{6}) \\
&= 4\pi(\left(\begin{matrix}2&\\&2\end{matrix}\right))f(1) .\stepcounter{equation}\tag{\theequation}\label{prop332}
\end{align*}

By Equation (\ref{hecke3}) and the inequality $\cond(\pi) \le 6$, we have
\begin{align*}
 -f(M_6)&=T_{\left(\begin{smallmatrix}1&&\\64&1&\\&&1\end{smallmatrix}\right)}^\Pi f(M_6) \\
&= f(M_6\left(\begin{matrix}1&&\\64&1&\\&&1\end{matrix}\right)) + f(M_6\left(\begin{matrix}1&&\\&1&\\64&&1\end{matrix}\right)) + f(M_6\left(\begin{matrix}1&&\\64&1&\\64&&1\end{matrix}\right))\\
&= f(\left(\begin{matrix}1&&\\64&1&\\&&1\end{matrix}\right)M_6) + f(\left(\begin{matrix}1&&\\&1&\\128&&1\end{matrix}\right)) + f(\left(\begin{matrix}1&&\\64&1&\\&&1\end{matrix}\right)\left(\begin{matrix}1&&\\&1&\\128&&1\end{matrix}\right))\\
&= f(M_6) + 2f(1),
\end{align*}
so $f(M_6) = -f(1)$.
By Equations (\ref{hecke1}), (\ref{prop330}) and (\ref{prop332}), we have
\begin{align*}
&2\sqrt{-1}f(1)\\
&=T_{\left(\begin{smallmatrix}2&&\\&2&\\&&1\end{smallmatrix}\right)}^\Pi f(1) \\
&=\sum_{0\le j, k \le1}\Pi(\left(\begin{matrix}2&&j\\&2&k\\&&1\end{matrix}\right)) f(1) + \sum_{0\le j \le1}\Pi(\left(\begin{matrix}2&j&\\&1&\\&&2\end{matrix}\right)) f(1) \\
&= 4\pi(\left(\begin{matrix}2&\\&2\end{matrix}\right))f(1) + \chi(2)\sum_{0\le j \le1}\pi(\left(\begin{matrix}2&j\\&1\end{matrix}\right)) f(1)\\
&= 2\sqrt{-1}f(M_6) = -2\sqrt{-1}f(1).
\end{align*}
Hence, we have $f(1) = 0$ and then $f(M_6) = -f(1) = 0$, which is a contradiction. Therefore we have $f(M_6) = 0$ and $i_0 \not=6$.

By Equation (\ref{hecke3}) and the inequality $\cond(\pi) \le i_0$, we have
{\small
\begin{align*}
 -f(M_{i_0})&=T_{\left(\begin{smallmatrix}1&&\\64&1&\\&&1\end{smallmatrix}\right)}^\Pi f(M_{i_0}) \\
&= f(M_{i_0}\left(\begin{matrix}1&&\\64&1&\\&&1\end{matrix}\right)) + f(M_{i_0}\left(\begin{matrix}1&&\\&1&\\64&&1\end{matrix}\right)) + f(M_{i_0}\left(\begin{matrix}1&&\\64&1&\\64&&1\end{matrix}\right))\\
&=f(\left(\begin{matrix}1&&\\64&1&\\&&1\end{matrix}\right)M_{i_0}) + f(\left(\begin{matrix}1&&\\&1&\\&&1+2^{6-i_0}\end{matrix}\right)M_{i_0}\left(\begin{matrix}1&&\\&1&\\&&\frac{1}{1+2^{6-i_0}}\end{matrix}\right))  \\
& + f(\left(\begin{matrix}1&&\\64&1&\\&&1+2^{6-i_0}\end{matrix}\right)M_{i_0}\left(\begin{matrix}1&&\\&1&\\&&\frac{1}{1+2^{6-i_0}}\end{matrix}\right))\\
&=(1+2\chi(1+2^{6-i_0}))f(M_{i_0}).
\end{align*}
}Note that $1+2^{6-i_0} \in \Zz_2^\times$ since $1 \le i_0 \le 5$. Therefore, $\chi(1+2^{6-i_0}) = -1$ and then $\cond(\chi) = 7-i_0$. In particular, $i_0$ is uniquely determined by $\chi$. Therefore we have $f(M_j)=0$ for $1\le j \le 5$, $j\not=i_0$. 
Since the central character $\omega_\Pi$ of $\Pi$ is trivial, the central character $\omega_\pi$ of $\pi$ is equal to $\omega_\Pi \chi^{-1} = \chi^{-1}$. If $1\le i_0 \le 3$, we obtain
\begin{align*}
\cond (\chi )= \cond(\omega_\pi) \le \cond(\pi) \le i_0 < 7-i_0 =\cond (\chi),
\end{align*}
which is a contradiction. Therefore we have  $4\le i_0 \le 5$. 

By Equation (\ref{hecke2}) and the inequalities $\cond(\pi) \le i_0$ and $\cond(\chi) = 7-i_0 \le i_0-1$, we have
\begin{align*}
0 &= T_{\left(\begin{smallmatrix}2&&\\&1&\\&&1\end{smallmatrix}\right)}^\Pi f(M_{i_0-1}) \\
&= \sum_{0\le j, k \le1}\Pi(\left(\begin{matrix}2&j&k\\&1&\\&&1\end{matrix}\right)) f(M_{i_0-1}) \\
&= \sum_{0\le j, k \le1}f(\left(\begin{matrix}2&j-\frac{2^{i_0-1}jk}{1+2^{i_0-1}k}&k\\&1&\\&&1+2^{i_0-1}k\end{matrix}\right)M_{i_0}\left(\begin{matrix}\frac{1}{1+2^{i_0-1}k}&&\\&1&\\&\frac{2^{i_0-1}j}{1+2^{i_0-1}k}&1\end{matrix}\right))\\
&=\sum_{0\le j \le1}\sum_{0\le k \le 1}\chi(1+2^{i_0-1}k)\pi(\left(\begin{matrix}2&j-\frac{2^{i_0-1}jk}{1+2^{i_0-1}k}\\&1\end{matrix}\right)) f(M_{i_0})\\
&=\sum_{0\le j \le1}\pi(\left(\begin{matrix}2&j\\&1\end{matrix}\right))\sum_{0\le k \le 1}\pi(\left(\begin{matrix}1&-\frac{2^{i_0-2}jk}{1+2^{i_0-1}k}\\&1\end{matrix}\right))\left( f(M_{i_0})\right)\\
&=2\sum_{0\le j \le1}\pi(\left(\begin{matrix}2&j\\&1\end{matrix}\right)) f(M_{i_0}).
\end{align*}
The last identity follows from $f(M_{i_0}) \in  \pi^{K'_{i_0}}$. Therefore we obtain $\sum_{0\le j \le1}\pi(\left(\begin{matrix}2&j\\&1\end{matrix}\right)) f(M_{i_0})=0$.

By Equation (\ref{hecke1}) and the inequality $\cond(\pi) \le i_0$, we have 
\begin{align*}
&2\sqrt{-1}f(M_{i_0})\\
&=T_{\left(\begin{smallmatrix}2&&\\&2&\\&&1\end{smallmatrix}\right)}^\Pi f(M_{i_0}) \\
&=\sum_{0\le j, k \le1}\Pi(\left(\begin{matrix}2&&j\\&2&k\\&&1\end{matrix}\right)) f(M_{i_0}) + \sum_{0\le j \le1}\Pi(\left(\begin{matrix}2&j&\\&1&\\&&2\end{matrix}\right)) f(M_{i_0}) \\
&= \sum_{0\le j, k \le1}f(M_{i_0}\left(\begin{matrix}2&&j\\&2&k\\&&1\end{matrix}\right)) + \sum_{0\le j \le1}f(M_{i_0}\left(\begin{matrix}2&j&\\&1&\\&&2\end{matrix}\right))\\
&=\sum_{0\le j, k \le1}f(\left(\begin{matrix}\frac{2}{1+2^{i_0}j} &0&\frac{j}{1+2^{i_0}j}\\-\frac{2^{i_0+1}k}{1+2^{i_0}j}&2&\frac{k}{1+2^{i_0}j}\\0&0&1\end{matrix}\right)M_{i_0+1}\left(\begin{matrix}1&&\\&1&\\&&1+2^{i_0}j\end{matrix}\right)) \\
&+ \sum_{0\le j \le1}f(\left(\begin{matrix}2&j&\\&1&\\&&2\end{matrix}\right)M_{i_0}\left(\begin{matrix}1&&\\&1&\\&2^{i_0-1}j&1\end{matrix}\right))\\
&= 4\pi(\left(\begin{matrix}2&\\&2\end{matrix}\right))f(M_{i_0+1}) + \chi(2)\sum_{0\le j \le1}\pi(\left(\begin{matrix}2&j\\&1\end{matrix}\right)) f(M_{i_0}) \\
&= 4\pi(\left(\begin{matrix}2&\\&2\end{matrix}\right))f(M_{i_0+1})=0,
\end{align*}
which is a contradiction.
Therefore we have $f(M_{i}) = 0$ for $1 \le i \le 6$, and also we have $f(1) \not=0$.

For $a_{ij} \in \Zz_2$, $1\le i, j\le 3$ with $a_{11}$, $a_{22}$, $a_{33} \in \Zz_2^\times$ we have
\begin{align*}
\pi(\left(\begin{matrix}a_{11}&a_{12}\\2^7a_{21}&a_{22}\end{matrix}\right))\chi(a_{33})f(1) = f(\left(\begin{matrix}a_{11}&a_{12}&a_{13}\\2^7a_{21}&a_{22}&a_{23}\\&&a_{33}\end{matrix}\right))=f(1).
\end{align*}
Therefore, $\cond(\pi)\le 7$ and $\chi$ is an unramified character. 

We show that $\Pi$ is not a discrete series representation. If $\Pi$ is a discrete series representation, then by the unramifiedness of $\chi$, $\Pi$ is the twist by an unramified character of the Steinberg representation. In particular, by \cite[Proposition $2.4$]{MR4522693}, $\Pi$ has a nonzero $K^{\GL_3}_2$-fixed vector, where  $K_2^{\GL_3} := \{(a_{ij})_{i,j} \in \GL_3(\Zz_2)\mid a_{21} \equiv a_{31} \equiv 0 \bmod 4\}$. This contradicts to Proposition \ref{prop22}, so $\Pi$ is not a discrete series representation. Since $\Pi$ is tempered by \cite[Corollary $1.4$]{Ito2018GaloisRA}, we can take $\pi$ and $\chi$ so that $\Pi = \Ind_P^{\GL_3}(\pi \boxtimes \chi)$.

We show $\cond(\pi) = 7$. By Equation (\ref{hecke3}) we have
\begin{align*}
 -f(1)&=T_{\left(\begin{smallmatrix}1&&\\64&1&\\&&1\end{smallmatrix}\right)}^\Pi f(1) \\
&= f(\left(\begin{matrix}1&&\\64&1&\\&&1\end{matrix}\right)) + f(\left(\begin{matrix}1&&\\&1&\\64&&1\end{matrix}\right)) + f(\left(\begin{matrix}1&&\\64&1&\\64&&1\end{matrix}\right))\\
&= \Pi(\left(\begin{matrix}1&&\\64&1&\\&&1\end{matrix}\right))f(1) + f(M_6)+\Pi(\left(\begin{matrix}1&&\\64&1&\\&&1\end{matrix}\right))f(M_6)\\
&= \pi(\left(\begin{matrix}1&\\64&1\end{matrix}\right))f(1) + f(M_6)+\pi(\left(\begin{matrix}1&\\64&1\end{matrix}\right))f(M_6)\\
&=  \pi(\left(\begin{matrix}1&\\64&1\end{matrix}\right))f(1).
\end{align*}
In particular, we have $f(1) \notin \pi^{K_6'}$. If $\cond(\pi) \le 6$, then we have a nonzero vector $v' \in \pi^{K_6'}$. Let $f' \in \Pi^{K^{\GL_3}} = \Ind(\pi\boxtimes\chi)^{K^{\GL_3}}$ be the vector such that $f'(1) = v'$, $f'(s_2) = 0$ and $f'(M_i) = 0$ for $1\le i \le 6$. By Proposition \ref{prop22}, $f'$ is a constant multiple of $f$. So we have $v' = f'(1) \notin  \pi^{K_6'}$, which is a contradiction. Therefore, we have $\cond(\pi) = 7$.

We compute $\chi(2)$.
By Equations (\ref{hecke1}), (\ref{centchar}), (\ref{prop330}) and $\omega_\pi = \chi^{-1}$, we obtain
\begin{align*}
	2\sqrt{-1}f(1)&=T_{\left(\begin{smallmatrix}2&&\\&2&\\&&1\end{smallmatrix}\right)}^\Pi f(1) \\
	&=\sum_{0\le j, k \le1}\Pi(\left(\begin{matrix}2&&j\\&2&k\\&&1\end{matrix}\right)) f(1) + \sum_{0\le j \le1}\Pi(\left(\begin{matrix}2&j&\\&1&\\&&2\end{matrix}\right)) f(1) \\
	&=\sum_{0\le j, k \le1}\pi(\left(\begin{matrix}2&\\&2\end{matrix}\right)) f(1) + \sum_{0\le j \le1}\pi(\left(\begin{matrix}2&j\\&1\end{matrix}\right)) f(1) \\
	&= 4(\chi(2))^{-1}f(1).
\end{align*}
This implies that $\chi(2) = -2\sqrt{-1}$.

Finally we prove that $\pi$ is supercuspidal.
Since $\chi$ is unramified, $\omega_\pi=  \chi^{-1}$ is also unramified. By \cite[Proposition $2.1.2$]{shimidtgl2} and the subsequent paragraph, if $\pi$ is a subquotient of an induced representation $\Ind_{P'}^{\GL_2}(\chi' \boxtimes (\chi')^{-1}\omega_\pi)$, where $P'$ is a minimal paraboric subgroup of $\GL_2$ and $\chi'$ is a character of $\Qq_2^\times$, then $\cond(\pi) = 1$ or $2\cond(\chi')$. Therefore $\pi$ is supercuspidal. 
\end{proof}


\section{Determination of $\pi$}
Let us fix an additive character $\psi$ of $\Qq_2$ such that $\psi|_{2\Zz_2}$ is trivial and $\psi|_{\Zz_2}$ is non-trivial. We define
\begin{align*}
\mathfrak{U} &:= \{ \left(\begin{matrix} a&b\\c&d\end{matrix}\right) \in \MM_2(\Zz_2)\mid c\equiv 0 \bmod 2\},\\
\mathfrak{P}^n &:=  \left(\begin{matrix}&1\\2&\end{matrix}\right)^n\mathfrak{U} &&\text{for }n \in \Zz,\\
U_\mathfrak{U}^n &:= 1 + \mathfrak{P}^n &&\text{for }n\ge1.
\end{align*}

For an element $\alpha \in \mathfrak{P}^{-5}\setminus  \mathfrak{P}^{-4}$, we define $E = \Qq_2(\alpha) \subset \MM_2(\Qq_2)$, which is a ramified quadratic extension of $\Qq_2$. We also define
\begin{align*}
J_\alpha&:=E^\times U_\mathfrak{U}^3,\\
\psi_\alpha(x) &:= \psi \circ \tr(\alpha (x-1)) \text{ for } x \in U_\mathfrak{U}^3.
\end{align*}
We denote by $C(\psi_\alpha, \mathfrak{U})$ the set of equivalence classes of irreducible representations $\Lambda$ of the group $J_\alpha$ such that $\Lambda|_{U_\mathfrak{U}^{3}} $ is a multiple of $\psi_\alpha$. By \cite[$15.6$, Proposition $1$]{02870c4051e64c73b17d25171df70483}, every $\Lambda \in C(\psi_\alpha, \mathfrak{U})$ has dimension $1$.

Let $\pi$ be the supercuspidal representation of $\GL_2(\Qq_2)$ in Proposition \ref{prop33}. By \cite[Appendix $3.8$]{BM02}, there exist $\alpha \in \mathfrak{P}^{-5}$ and $\Lambda \in C(\psi_\alpha, \mathfrak{U})$ such that
\begin{align*}
\pi = \cInd_{J_\alpha}^{\GL_2(\Qq_2)} \Lambda,
\end{align*}
where $\cInd$ is the compact induction. Note that $\Lambda(\left(\begin{matrix}a&\\&a\end{matrix}\right)) = \omega_\pi(a) = \chi^{-1}(a)$. In particular, $\Lambda|_{\Zz_2^\times}$ is trivial.

To determine $\pi$, we firstly determine $\alpha$. By \cite[$13.5$, Proposition]{02870c4051e64c73b17d25171df70483}, $ \alpha + \mathfrak{P} ^{-4}$ contains no nilpotent element.  Hence we have
\[
\alpha \in \{\frac{1}{8}\left(\begin{matrix}2a&b\\2c&2d\end{matrix}\right) \in M_2(\Qq_2)\mid b,c \in \Zz_2^\times, a, d \in \Zz_2\}.
\]

By \cite[$15.5$, Induction Theorem]{02870c4051e64c73b17d25171df70483}, the equivalence classes of cuspidal representations correspond to the conjugacy classes of $(J_\alpha, \Lambda)$, $\Lambda \in C(\psi_\alpha, \mathfrak{U})$. For any element $\alpha = \frac{1}{8}\left(\begin{matrix}2a&b\\2c&2d\end{matrix}\right)$ in $ \mathfrak{P}^{-5}$, we have
\begin{align*}
\left(\begin{matrix}-2ad+bc&a\\0&c\end{matrix}\right)^{-1}\frac{1}{8}\left(\begin{matrix}2a&b\\2c&2d\end{matrix}\right)\left(\begin{matrix}-2ad+bc&a\\0&c\end{matrix}\right)=\frac{1}{8}\left(\begin{matrix}0&1\\ -4ad+2bc &2(a+d) \end{matrix}\right).
\end{align*}
Therefore we may assume $\alpha = \alpha(D, t) := \frac{1}{8}\left(\begin{matrix}0&1\\ 2D &2t \end{matrix} \right)$ for some $D\in\Zz_2^\times$ and $t \in \Zz_2$. 

\begin{lemma}\label{lemmapsi}
For any $D$, $D' \in  \Zz_2^\times$ and $t$, $t' \in \Zz_2$ with $D - D' \in 4\Zz_2$ and $t-t'\in 2\Zz_2$, we have $\psi_{\alpha(D,t)} = \psi_{\alpha(D', t')}$ and $J_{\alpha(D,t)} = J_{\alpha(D', t')}$.
\end{lemma}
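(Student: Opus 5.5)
Both claims reduce to the standard duality fact that $\mathrm{tr}(\beta\,\mathfrak{P}^n)\subset 2\Zz_2$ whenever $\beta\in\mathfrak{P}^{1-n}$. The proof splits into the character identity and the group identity.

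\emph{Character.} Set $\beta=\alpha(D,t)-\alpha(D',t')$. Then
\[
\beta=\frac{1}{8}\left(\begin{matrix}0&0\\ 2(D-D')&2(t-t')\end{matrix}\right)=\left(\begin{matrix}0&0\\ \frac{D-D'}{4}&\frac{t-t'}{4}\end{matrix}\right).
\]
By hypothesis $\frac{D-D'}{4}\in\Zz_2$ and $\frac{t-t'}{4}\in\frac12\Zz_2$.

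For $x\in U_\mathfrak{U}^3$ write $x-1=y\in\mathfrak{P}^3$. It suffices to show $\tr(\beta y)\in 2\Zz_2$, since $\psi$ is trivial on $2\Zz_2$. I will compute $\mathfrak{P}^3$ explicitly. Put $\varpi=\left(\begin{smallmatrix}&1\\2&\end{smallmatrix}\right)$, so $\varpi^2=2$. Then $\mathfrak{P}^3=2\varpi\mathfrak{U}$. Since $\varpi\mathfrak{U}=\mathfrak{P}$ consists of matrices $\left(\begin{smallmatrix}a&b\\c&d\end{smallmatrix}\right)$ with $a,c,d\in2\Zz_2$, $b\in\Zz_2$, the elements of $\mathfrak{P}^3$ have $a,c,d\in 4\Zz_2$ and $b\in 2\Zz_2$.

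Next, for $y=\left(\begin{smallmatrix}a&b\\c&d\end{smallmatrix}\right)$ we get $\tr(\beta y)=\frac{D-D'}{4}\,b+\frac{t-t'}{4}\,d$. The first term lies in $\Zz_2\cdot 2\Zz_2$ and the second in $\frac12\Zz_2\cdot4\Zz_2$, so both lie in $2\Zz_2$. Hence $\psi_{\alpha(D,t)}=\psi_{\alpha(D',t')}$. This is the only step with any arithmetic content, and the only care needed is getting the entrywise description of $\mathfrak{P}^3$ right.

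\emph{Group.} Set $E=\Qq_2(\alpha)$ and $E'=\Qq_2(\alpha')$, where $\alpha=\alpha(D,t)$ and $\alpha'=\alpha(D',t')$. I will show $E^\times\subset E'^\times U_\mathfrak{U}^3$; by symmetry this gives $J_\alpha=J_{\alpha'}$.

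Since $\alpha$ has valuation $-5$ in the lattice filtration, $\alpha\mathfrak{U}=\mathfrak{P}^{-5}$ and $\alpha$ is invertible with $\alpha^{-1}\in\mathfrak{P}^5$. Also $\beta=\alpha-\alpha'\in\mathfrak{P}^{-2}$. To see this, note $\mathfrak{P}^{-2}=\frac12\mathfrak{U}$, and $\beta$ has lower-left entry in $\Zz_2\subset\frac12\cdot2\Zz_2$ and lower-right entry in $\frac12\Zz_2$.

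Therefore
\[
\alpha=\alpha'(1+\alpha'^{-1}\beta),\qquad \alpha'^{-1}\beta\in\mathfrak{P}^{5}\mathfrak{P}^{-2}=\mathfrak{P}^3,
\]
so $\alpha\in E'^\times U_\mathfrak{U}^3$.

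Now $E^\times=\Qq_2^\times\,\Zz_2[\alpha]^\times$-type elements, that is, $E^\times$ is generated by $\alpha$, $\Qq_2^\times$, and the units $1+\mathfrak{p}_E$, together with $\Zz_2^\times$ (the residue field is $\mathbb{F}_2$). It is cleaner to use that $E^\times=\alpha^{\Zz}\,\Zz_2^\times(1+\mathfrak{p}_E^{k})\cdots$; concretely, every element of $E^\times$ can be written as $2^m\alpha^{\epsilon}(u+v\alpha)$ with $\epsilon\in\{0,1\}$, $u\in \Zz_2^\times$, and $v\alpha\in\mathfrak{P}^{\ge1}$.

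It remains to compare $u+v\alpha$ with $u+v\alpha'$. Their difference is $v\beta$, and $v\beta\in\mathfrak{P}^{3}$ because $v\alpha\in\mathfrak{P}^1$ forces $v\in 2^3\Zz_2$, so $v\beta\in 8\mathfrak{P}^{-2}=\mathfrak{P}^4$. Hence $u+v\alpha\in(u+v\alpha')U_\mathfrak{U}^3$. Finally, $E'^\times$ normalizes $U_\mathfrak{U}^3$ because $E'^\times$ normalizes $\mathfrak{U}$, being contained in its normalizer. So $E'^\times U_\mathfrak{U}^3$ is a group containing every element of $E^\times$, and the inclusion follows. The main obstacle is this bookkeeping of the $\mathfrak{P}$-valuations of elements of $E^\times$, but it is routine.
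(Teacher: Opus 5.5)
Your proof is correct, but it proves the group equality by a different route from the paper. The character identity is the same computation in both. The paper expands $\tr(\alpha(D,t)(x-1))=\frac12x_3+\frac12Dx_2+tx_4$ for $x\in U_\mathfrak{U}^3$; you expand the trace of $\beta=\alpha(D,t)-\alpha(D',t')$ against $\mathfrak{P}^3$.

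For $J_{\alpha(D,t)}=J_{\alpha(D',t')}$, the paper argues in one line from the first part. It uses the Intertwining Theorem (15.1 in Bushnell--Henniart): $J_\alpha$ is exactly the set of elements of $\GL_2(\Qq_2)$ intertwining $\psi_\alpha$, so equal characters have equal $J$.

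You instead prove $E^\times\subset E'^\times U_\mathfrak{U}^3$ directly by valuation bookkeeping, using only $\beta\in\mathfrak{P}^{-2}$. You write $\alpha=\alpha'(1+\alpha'^{-1}\beta)$ with $\alpha'^{-1}\beta\in\mathfrak{P}^3$, and observe that units $u+v\alpha$ with $v\in 8\Zz_2$ differ from $u+v\alpha'$ by $v\beta\in\mathfrak{P}^4$. This avoids the intertwining theory entirely. The cost is some structure the paper gets for free, and you state three of those points too loosely:
\begin{enumerate}
\item The description of $E^\times$ should be justified as follows. The matrix $8\alpha=\left(\begin{smallmatrix}0&1\\2D&2t\end{smallmatrix}\right)$ is a root of the Eisenstein polynomial $X^2-2tX-2D$. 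Hence it is a uniformizer of $E$, $\mathfrak{o}_E=\Zz_2[8\alpha]$, and $(8\alpha)^2\in 2\mathfrak{o}_E^\times$. Your phrase about ``$\Zz_2[\alpha]^\times$-type elements'' is not meaningful as written.
\item That $E'^\times U_\mathfrak{U}^3$ is a group does not follow from the tautology you give. It follows from $8\alpha'=\varpi\left(\begin{smallmatrix}D'&t'\\0&1\end{smallmatrix}\right)$, where the second factor lies in $\mathfrak{U}^\times$, together with $\mathfrak{o}_{E'}\subset\mathfrak{U}$. Together these give $E'^\times\subset$ the normalizer of $\mathfrak{U}$.
\item To pass from $u+v\alpha=(u+v\alpha')+v\beta$ to $u+v\alpha\in(u+v\alpha')U_\mathfrak{U}^3$, you need $(u+v\alpha')^{-1}\in\mathfrak{U}$. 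This holds because $u+v\alpha'\in\Zz_2^\times(1+\mathfrak{P})$.
\end{enumerate}
With these filled in, your argument is a complete, self-contained proof of the lemma.
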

\begin{proof}
For $x = \left(\begin{matrix}4x_1+1&2x_2\\4x_3&4x_4+1\end{matrix}\right) \in U_\mathfrak{U}^{3}$ with $x_1$, $x_2$, $x_3$, and $x_4\in\Zz_2$, we have
\[
\tr (\alpha(D,t)(x-1)) = \frac{1}{2}x_3 +\frac{1}{2}Dx_2 + tx_4.
\]
Therefore for any $D$, $D' \in  \Zz_2^\times$ and $t$, $t' \in \Zz_2$ with $D - D' \in 4\Zz_2$ and $t-t'\in 2\Zz_2$, we have $\psi_{\alpha(D,t)} = \psi_{\alpha(D', t')}$.

By \cite[$15.1$, Intertwining Theorem]{02870c4051e64c73b17d25171df70483}, $J_{\alpha(D,t)} $ is the set of intertwiners of $\psi_{\alpha(D,t)} $, and $J_{\alpha(D',t')} $ is the set of intertwiners of $\psi_{\alpha(D',t')} $. Therefore,  $\psi_{\alpha(D,t)} = \psi_{\alpha(D', t')}$ implies $J_{\alpha(D,t)} = J_{\alpha(D', t')}$.
 \end{proof}
 By Lemma \ref{lemmapsi}, we may assume that $D\in\{1, -1\}$ and $t\in\{0,1\}$.
As we saw in the proof of Proposition \ref{prop33}, $\omega_\pi = \chi^{-1}$ is unramified. Since $\psi_{\alpha(1,1)}(\left(\begin{matrix}5&\\&5\end{matrix}\right)) = \psi_{\alpha(-1,1)}(\left(\begin{matrix}5&\\&5\end{matrix}\right)) = \psi(1) =-1$, we have $t=0$. 
 
\begin{lemma}
For any $\alpha = \alpha(D,0)$ with $D\in\{-1,1\}$ and any $\Lambda \in C(\psi_\alpha, \mathfrak{U})$ whose central character is unramified, there exist $F \in \left(\cInd_{J_\alpha}^{\GL_2(\Qq_2)} \Lambda\right)^{K_7}$ such that $F(\left(\begin{matrix}8&\\&1\end{matrix}\right)) = 1$ and $\supp F \subset J_\alpha \left(\begin{matrix}8&\\&1\end{matrix}\right)K_7$.
\end{lemma}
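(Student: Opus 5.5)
The plan is to build $F$ directly as the function supported on the single double coset $J_\alpha \left(\begin{smallmatrix}8&\\&1\end{smallmatrix}\right) K_7$, by
\[
F\Bigl(j\left(\begin{smallmatrix}8&\\&1\end{smallmatrix}\right)k\Bigr) := \Lambda(j), \qquad j\in J_\alpha,\ k\in K_7 .
\]
I read $K_7$ as the group $K'_7=\{\left(\begin{smallmatrix}a&b\\c&d\end{smallmatrix}\right)\in\GL_2(\Zz_2)\mid c\equiv0,\ d\equiv 1 \bmod 2^7\}$ of Proposition \ref{prop33}. Compact support modulo the centre is automatic, and right $K_7$-invariance holds by construction. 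So the only thing to prove is that $F$ is well defined. Writing $g=\left(\begin{smallmatrix}8&\\&1\end{smallmatrix}\right)$, this amounts to showing that $\Lambda$ is trivial on $H:=J_\alpha\cap gK_7g^{-1}$.

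I first compute $gK_7g^{-1}$ explicitly:
\[
gK_7g^{-1}=\Bigl\{\left(\begin{smallmatrix}a&8b\\ c/8&d\end{smallmatrix}\right)\Bigr\}.
\]
Thus an element of $gK_7g^{-1}$ has unit determinant and entries satisfying: (1,2)-entry in $8\Zz_2$, (2,1)-entry in $16\Zz_2$, (2,2)-entry $\equiv 1 \bmod 2^7$, and (1,1)-entry a unit. Since $H$ consists of elements of unit determinant, it lies in $\mathcal{O}_E^\times U_\mathfrak{U}^3$.

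Next I use the uniformizer $\varpi_E=8\alpha=\left(\begin{smallmatrix}0&1\\2D&0\end{smallmatrix}\right)$. Every unit of $\mathcal{O}_E$ has the form $a(1+b\varpi_E)$ with $a\in\Zz_2^\times$. Moreover $1+4\Zz_2\varpi_E\subset U_\mathfrak{U}^3$, because $4\varpi_E\in\mathfrak{P}^5\subset\mathfrak{P}^3$. Hence
\[
\mathcal{O}_E^\times U_\mathfrak{U}^3=\bigcup_{e\in\{1,\,1+\varpi_E,\,1+2\varpi_E\}}\Zz_2^\times\, e\, U_\mathfrak{U}^3 .
\]
I expect this cosetwise description of $H$ to be the main obstacle, and I would rule out the two nontrivial cosets by reducing entries. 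For $e=1+\varpi_E=\left(\begin{smallmatrix}1&1\\2D&1\end{smallmatrix}\right)$ and $u\in U_\mathfrak{U}^3$, the (1,2)-entry of $eu$ is $u_{12}+u_{22}\equiv 1\bmod 2$. After scaling by $a\in\Zz_2^\times$ it is still odd, so $aeu\notin gK_7g^{-1}$. For $e=1+2\varpi_E=\left(\begin{smallmatrix}1&2\\4D&1\end{smallmatrix}\right)$, the (2,1)-entry of $eu$ is $4Du_{11}+u_{21}\equiv 4D\not\equiv0 \bmod 8$, using $u_{21}\in 8\Zz_2$ and $u_{11}$ a unit. So again $aeu\notin gK_7g^{-1}$. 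Therefore $H\subset \Zz_2^\times U_\mathfrak{U}^3$.

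Finally, take $h=au\in H$ with $a\in\Zz_2^\times$ scalar and $u\in U_\mathfrak{U}^3$. Then $\Lambda(h)=\chi^{-1}(a)\psi_\alpha(u)=\psi_\alpha(u)$, because the central character is unramified. Write $u=\left(\begin{smallmatrix}4x_1+1&2x_2\\4x_3&4x_4+1\end{smallmatrix}\right)$. Since $u=a^{-1}h$, the entry conditions on $h$ give $2x_2\in 8\Zz_2$ and $4x_3\in16\Zz_2$, so $x_2,x_3\in 2\Zz_2$. By the formula in the proof of Lemma \ref{lemmapsi} with $t=0$,
\[
\psi_\alpha(u)=\psi\bigl(\tfrac12x_3+\tfrac12Dx_2\bigr)=1,
\]
since both arguments lie in $\Zz_2$ and are in fact even, and $\psi|_{2\Zz_2}$ is trivial. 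Note that the $x_4$-term is absent precisely because $t=0$; this is where the earlier reduction to $t=0$ is used. Hence $\Lambda|_H=1$, so $F$ is well defined. It is $K_7$-fixed, satisfies $F(g)=\Lambda(1)=1$, and has support in $J_\alpha gK_7$.
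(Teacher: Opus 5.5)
Your construction of $F$, the reduction of well-definedness to $\Lambda|_{J_\alpha\cap gK_7g^{-1}}=1$, the parity argument on the $(1,2)$-entry, and the final $\psi_\alpha$ computation all follow the paper's route. However, one step is false as written.

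\textbf{The false step.} To exclude $e=1+2\varpi_E$ you use $u_{21}\in 8\Zz_2$ for $u\in U^3_\mathfrak{U}$. But $\mathfrak{P}^3=2\mathfrak{P}$ only gives $u_{21}\in 4\Zz_2$; compare the parametrization $\left(\begin{smallmatrix}4x_1+1&2x_2\\4x_3&4x_4+1\end{smallmatrix}\right)$ in Lemma \ref{lemmapsi}. In fact $2\varpi_E=\left(\begin{smallmatrix}0&2\\4D&0\end{smallmatrix}\right)\in\mathfrak{P}^3$, so $1+2\varpi_E$ itself lies in $U^3_\mathfrak{U}$. Your claim that no $aeu$ lies in $gK_7g^{-1}$ therefore already fails for $a=1$, $u=e^{-1}$.

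\textbf{The repair.} The conclusion $H\subset\Zz_2^\times U^3_\mathfrak{U}$ still holds. Use $1+2\Zz_2\varpi_E\subset U^3_\mathfrak{U}$ instead of $1+4\Zz_2\varpi_E\subset U^3_\mathfrak{U}$. For odd $b$ one has $(1+\varpi_E)^{-1}(1+b\varpi_E)=\frac{1-2Db}{1-2D}\bigl(1+\frac{b-1}{1-2Db}\varpi_E\bigr)$, which lies in $\Zz_2^\times(1+2\Zz_2\varpi_E)$. Hence $\mathcal{O}_E^\times U^3_\mathfrak{U}=\Zz_2^\times U^3_\mathfrak{U}\cup\Zz_2^\times(1+\varpi_E)U^3_\mathfrak{U}$. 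This also covers the class $b\equiv 3\bmod 4$, which you dropped without justification when passing from $1+4\Zz_2\varpi_E\subset U^3_\mathfrak{U}$ to three representatives. Only the coset of $1+\varpi_E$ then needs excluding, and your odd-$(1,2)$-entry argument does this correctly. It is the same observation the paper uses to force $y$ even in $\left(\begin{smallmatrix}x&y\\2Dy&x\end{smallmatrix}\right)$, and hence $a\equiv d\bmod 4$.

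\textbf{Two smaller points.} First, in the last step the entry conditions give $x_2,x_3\in 4\Zz_2$, not merely $2\Zz_2$. With only $x_i\in 2\Zz_2$, the quantities $\tfrac12 x_3$ and $\tfrac12 Dx_2$ would just lie in $\Zz_2$, where $\psi$ need not be trivial. So state the stronger inclusion, since your ``in fact even'' depends on it.

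Second, in this section the paper's $K_7$ is $\{\left(\begin{smallmatrix}a&b\\128c&d\end{smallmatrix}\right)\}\subset\GL_2(\Zz_2)$ with no condition on $d$. That is the group needed afterwards to build $f\in\Pi^{K^{\GL_3}}$ with $f(1)=F$. Your argument never uses $d\equiv 1\bmod 2^7$, so it works verbatim for that group. Alternatively, that group equals $\Zz_2^\times K'_7$ and $\Lambda$ is trivial on $\Zz_2^\times$, so your $F$ is invariant under it anyway.
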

\begin{proof}
First we determine $K_7 \cap \left(\begin{matrix}8&\\&1\end{matrix}\right)^{-1}J_\alpha  \left(\begin{matrix}8&\\&1\end{matrix}\right)$. Let $k = \left(\begin{matrix} a&b\\128c&d\end{matrix}\right) \in K_7$. Then we have $a$, $d \in \Zz_2^\times$ and $b$, $c \in \Zz_2$. 
Also we have 
 \begin{align*}
 E^\times = \{ \left(\begin{matrix}&1\\2D&\end{matrix}\right)^n \mid n\in\Zz \} \times  \left\{ \left(\begin{matrix}x&y\\2Dy&x\end{matrix}\right) \mid x \in \Zz_2^\times, y\in \Zz_2 \right\} .
 \end{align*}
 Since $\det \left( \left(\begin{matrix}8&\\&1\end{matrix}\right)k  \left(\begin{matrix}8&\\&1\end{matrix}\right)^{-1}\right)  \in \Zz_2^\times $ and $U_\mathfrak{U}^3\subset \GL_2(\Zz_2)$, if $k \in  \left(\begin{matrix}8&\\&1\end{matrix}\right)^{-1}J_\alpha  \left(\begin{matrix}8&\\&1\end{matrix}\right)$, then there exist $x \in \Zz_2^\times$, $y\in \Zz_2$ and $r$, $s$, $t$, $u \in \Zz_2$ such that 
\begin{align*}
\left(\begin{matrix}8&\\&1\end{matrix}\right)k  \left(\begin{matrix}8&\\&1\end{matrix}\right)^{-1} &=\left(\begin{matrix}x&y\\2Dy&x\end{matrix}\right)\left(\begin{matrix}1+4r&2s\\4t &1+4u\end{matrix}\right).
\end{align*}
Then we have
\begin{align*}
\left(\begin{matrix} a&8b\\16c&d\end{matrix}\right) &= \left(\begin{matrix}x + 4(rx+ty) & 2sx + (4u+1)y\\ 4tx + 2D(4r+1)y&x + 4(ux+Dsy)\end{matrix}\right).
\end{align*}
In particular we have $ a\equiv d \bmod 4$. On the other hand, if $ a\equiv d \bmod 4$, then we have
 \begin{align*}
 \left(\begin{matrix} a&8b\\16c&d\end{matrix}\right)  =   \left(\begin{matrix} a&\\&a\end{matrix}\right)   \left(\begin{matrix} 1&8a^{-1}b\\16a^{-1}c&a^{-1}d\end{matrix}\right) \in J_\alpha.
 \end{align*}
Therefore we have 
\begin{align*}
K_7 \cap \left(\begin{matrix}8&\\&1\end{matrix}\right)^{-1}J_\alpha  \left(\begin{matrix}8&\\&1\end{matrix}\right) = \left\{\left(\begin{matrix}a&b\\128c&d\end{matrix}\right) \mid b,c \in \Zz_2, a,d\in\Zz_2^\times, a\equiv d \bmod 4\right\}.
\end{align*}

We define a vector $F  \in \left(\cInd_{J_\alpha}^{\GL_2(\Qq_2)} \Lambda\right)^{K_7}$ as follows:
\begin{align*}
\begin{cases}
F(p\left(\begin{matrix}8&\\&1\end{matrix}\right)k ) = \Lambda(p) & \text{$p\in J_\alpha$, $ k \in K_7$,}\\
F(g) = 0 & \text{$g \in \GL_2(\Qq_2) \setminus J_\alpha \left(\begin{matrix}8&\\&1\end{matrix}\right) K_7$}.
\end{cases}
\end{align*}
We show that $F$ is well-defined. Assume that $p$, $p' \in J_\alpha$ and $k$, $k' \in K_7$ satisfy $p\left(\begin{matrix} 8&\\&1\end{matrix}\right)k =  p'\left(\begin{matrix} 8&\\&1\end{matrix}\right)k'$. 
Then $kk'^{-1} =  \left(\begin{matrix} 8&\\&1\end{matrix}\right)^{-1}p^{-1}p'\left(\begin{matrix} 8&\\&1\end{matrix}\right)$ belongs to $ K_7 \cap \left(\begin{matrix}8&\\&1\end{matrix}\right)^{-1}J_\alpha  \left(\begin{matrix}8&\\&1\end{matrix}\right)$. 
We write $kk'^{-1} =  \left(\begin{matrix} a&b\\128c&d\end{matrix}\right)$. 
Then, since $\Lambda|_{\Zz_2^\times}$ is trivial, we have
\begin{align*}
\Lambda(p')&= \Lambda(p\left(\begin{matrix}8&\\&1\end{matrix}\right)\left(\begin{matrix}a&b\\128c&d\end{matrix}\right)\left(\begin{matrix}8&\\&1\end{matrix}\right)^{-1})\\
&=\Lambda(p\left(\begin{matrix}a&\\&a\end{matrix}\right)\left(\begin{matrix}1&8a^{-1}b\\16a^{-1}c&a^{-1}d\end{matrix}\right))\\
&=\Lambda(p)\Lambda(\left(\begin{matrix}a&\\&a\end{matrix}\right))\psi_\alpha(\left(\begin{matrix}1&8a^{-1}b\\16a^{-1}c&a^{-1}d\end{matrix}\right))\\
&= \Lambda(p)\psi(\tr\frac{1}{8}\left(\begin{matrix}&1\\2D&\end{matrix}\right)\left(\begin{matrix}1&8a^{-1}b\\16a^{-1}c&a^{-1}d\end{matrix}\right))\\
&=\Lambda(p) \psi(2a^{-1}(bD+c))\\
&=\Lambda(p).
\end{align*}
Therefore $F$ is well-defined.
 \end{proof}

\begin{lemma}\label{lemma21} We have 
\[
K^{\GL_3} \left(\begin{matrix}36&1&\\128&4&\\&&4\end{matrix}\right)K^{\GL_3} = \coprod_{\substack{r,s,t,u\in\{-1,0,1,2\}\\ ru-st \equiv 1 \bmod 4}}\left(\begin{matrix}4&u&-s\\128r&4&0\\128t&0&4\end{matrix}\right)K^{\GL_3} .
\]
\end{lemma}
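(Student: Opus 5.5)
Let $g = \left(\begin{smallmatrix}36&1&\\128&4&\\&&4\end{smallmatrix}\right)$ and $K=K^{\GL_3}$. The plan is to compute the number of cosets $[KgK:K] = [K : K\cap gKg^{-1}]$, then exhibit the listed matrices $g_{r,s,t,u}$ as elements of $KgK$, and finally show they lie in pairwise distinct left $K$-cosets. Since $\det g = 4\cdot 4\cdot(144-128)=256$ and each listed matrix has determinant $64(16-128(ru-st))\cdot$(up to the computed value), I first check the determinants agree up to a unit. I also note that $g$ factors as $4\cdot h$ with $h$ having upper-left block $\left(\begin{smallmatrix}9&1/4\\32&1\end{smallmatrix}\right)$; it is more convenient to work directly with integral matrices.

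\emph{Step 1: membership.} For each admissible $(r,s,t,u)$ I write $g_{r,s,t,u} = k_1 g k_2$ with $k_1,k_2\in K$. The natural choice is to take $k_1$ of the shape
\[
\left(\begin{smallmatrix}a&b&c\\128r'&d&e\\128t'&f&h\end{smallmatrix}\right),
\]
where the lower-right $2\times 2$ block is built from the matrix $\left(\begin{smallmatrix}u&-s\\-t&r\end{smallmatrix}\right)$ (which is invertible mod $4$ by $ru-st\equiv1$). Concretely, I would check the identity for the representative $g_{0,-1,1,0}$ or similar (for which $ru-st=1$), show it equals $gk$ for explicit $k$, and then use the action of the subgroup of $K$ given by matrices $\mathrm{diag}(1,\gamma)$ with $\gamma\in\SL_2(\Zz_2)$ reduced mod $4$, acting by conjugation on the first-column/first-row data $(128r,128t)$ and $(u,-s)$. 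This reduces membership to a single explicit factorization plus the observation that left multiplication by $\mathrm{diag}(1,\gamma)$ and right multiplication by $\mathrm{diag}(1,\gamma^{-1})$ permute the listed family.

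\emph{Step 2: counting.} I compute $K\cap gKg^{-1}$ directly. Writing $k\in K$ and requiring $g^{-1}kg\in K$ imposes congruence conditions: entries $(1,2),(1,3)$ must be divisible by $4$ modulo corrections, and the $(2,1),(3,1)$ entries must satisfy conditions mod $2^9$ rather than $2^7$, etc. I expect the index to be $|\{(r,s,t,u)\in(\Zz/4)^4: ru-st\equiv1\}| = |\SL_2(\Zz/4)| = 48$. So the goal is to show $[K:K\cap gKg^{-1}]=48$, most easily by exhibiting the surjection of $K$ onto the set of cosets via the lower-right/first-row data mod $4$ and identifying the stabilizer.

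\emph{Step 3: distinctness.} I show $g_{r,s,t,u}^{-1}g_{r',s',t',u'}\in K$ forces $(r,s,t,u)\equiv(r',s',t',u')\bmod 4$. Since $g_{r,s,t,u}$ equals $4$ times a matrix whose inverse has a computable form (the block $\left(\begin{smallmatrix}1& u/4&-s/4\\32r&1&0\\32t&0&1\end{smallmatrix}\right)$), the product's integrality and the $(2,1),(3,1)$ congruences mod $128$ reduce to the mod-$4$ congruences of $u,s$ and $r,t$. Together with Step 2, distinct cosets numbering $48$ exhaust $KgK$. The main obstacle is Step 1 (finding the explicit $k_1,k_2$ uniformly); if the uniform $\SL_2(\Zz/4)$-action argument fails, I would instead verify the identity $KgK=\coprod g_{r,s,t,u}K$ by checking that $KgK\cdot$ is stable under left multiplication by generators of $K$ permuting the listed cosets, and that $g$ itself lies in one of them.
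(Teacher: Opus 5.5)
Your strategy is sound, but you reach the key inclusion $KgK\subseteq\bigcup g_{r,s,t,u}K$ (with $K=K^{\GL_3}$) by a different route from the paper.

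\textbf{The paper's route.} It first replaces $g$ by $g_0=g_{1,0,0,1}=\left(\begin{smallmatrix}4&1&\\128&4&\\&&4\end{smallmatrix}\right)$, via $g=g_0\left(\begin{smallmatrix}-1/7&&\\256/7&1&\\&&1\end{smallmatrix}\right)$. It then factors every $k\in K$ as $\mathrm{diag}(1,\gamma)\,\mathrm{diag}(a,1,1)\,n$ with $n=\left(\begin{smallmatrix}1&w&x\\128y&1&\\128z&&1\end{smallmatrix}\right)$. It checks that $ng_0\in g_0K$ and that $\mathrm{diag}(a,\gamma)\,g_0=g_{r',s',t',u'}\,\mathrm{diag}(a,\gamma)$. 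This shows directly that every $kgK$ is a listed coset, and the explicit formula for $g_{r,s,t,u}^{-1}g_{r',s',t',u'}$ then gives a bijection $KgK/K\to\SL_2(\Zz/4)$.

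\textbf{How yours compares.} Write $g_A=g_{r,s,t,u}$ for $A=\left(\begin{smallmatrix}r&s\\t&u\end{smallmatrix}\right)$. Your membership step is the special case $a=1$, $\gamma\in\SL_2(\Zz_2)$ of that identity: conjugation by $\mathrm{diag}(1,\gamma)$ sends $g_A$ to $g_{\gamma A}$. Your Step 3 is the paper's distinctness computation. The real difference is that you get exhaustion from the count $[KgK:K]=[K:K\cap gKg^{-1}]=48=|\SL_2(\Zz/4)|$ rather than from a factorization of $K$. That route is legitimate and somewhat lighter: it needs one congruence computation and two group orders. It also carries over verbatim to the double coset of $\left(\begin{smallmatrix}8&1&\\128&8&\\&&8\end{smallmatrix}\right)$ used later, where the index is $|\SL_2(\Zz/8)|=384$. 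The paper's route, in exchange, tells you which listed coset a given $kgK$ is.

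\textbf{What is missing or wrong.} As written, the decisive count is only ``expected''. Moreover, the congruence conditions you anticipate are not the ones that arise: nothing extra is imposed on $k_{12},k_{13},k_{21},k_{31}$. Writing $g=4h$ with $h^{-1}=\left(\begin{smallmatrix}1&-1/4&\\-32&9&\\&&1\end{smallmatrix}\right)$, one finds $g^{-1}kg\in K$ if and only if $k_{11}\equiv k_{22}$ and $k_{23}\equiv k_{32}\equiv 0 \bmod 4$. Consider the surjective homomorphism $K\to(\Zz/4)^\times\times\GL_2(\Zz/4)$, $k\mapsto(k_{11},\ \text{lower right }2\times2\text{ block})\bmod 4$, whose target has order $2\cdot 96=192$. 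Then $K\cap gKg^{-1}$ is the full preimage of a subgroup of order $4$, which gives index $48$.

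There are also three smaller fixes:
\begin{enumerate}
\item The base representative must be $g_{1,0,0,1}=g\left(\begin{smallmatrix}-7&&\\256&1&\\&&1\end{smallmatrix}\right)$. It cannot be $g_{0,-1,1,0}$, which lies in a different right $K$-coset.
\item Conjugation permutes the listed family only modulo right multiplication by $K$. So membership also needs the ``if'' half of your Step 3, which you can read off from the same matrix $g_{r,s,t,u}^{-1}g_{r',s',t',u'}$. It also needs the surjectivity of $\SL_2(\Zz_2)\to\SL_2(\Zz/4)$.
\item The determinants are $\det g=64$ and $\det g_{r,s,t,u}=64(1-8(ru-st))$, both of $2$-adic valuation $6$. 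With your values $256$ and $64(16-128(ru-st))$, the sanity check would actually fail.
\end{enumerate}
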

\begin{proof} We have
\begin{align*}
 \left(\begin{matrix}36&1&\\128&4&\\&&4\end{matrix}\right)  &=  \left(\begin{matrix}4&1&\\128&4&\\&&4\end{matrix}\right) \left(\begin{matrix}\frac{-1}{7}&&\\\frac{256}{7}&1&\\&&1\end{matrix}\right),
 \end{align*} 
 and for any $\left(\begin{matrix}a&b&c\\128x&r&s\\128y&t&u\end{matrix}\right) \in K_7$, we have
\begin{align*}
\left(\begin{matrix}a&b&c\\128x&r&s\\128y&t&u\end{matrix}\right) &= \left(\begin{matrix}1&&\\&r&s\\&t&u\end{matrix}\right)\left(\begin{matrix}a&&\\&1&\\&&1\end{matrix}\right)\left(\begin{matrix}1&a^{-1}b&a^{-1}c\\ \frac{128(ux-sy)}{ru-st}&1&\\ \frac{128(-tx+ry)}{ru-st}&&1\end{matrix}\right). \stepcounter{equation}\tag{\theequation}\label{decompofK7}
\end{align*}
For $w$, $x$, $y$, $z\in\Zz_2$, we have
\[
\left(\begin{matrix}1&w&x\\ 128y&1&\\128 z&&1\end{matrix}\right) \left(\begin{matrix}4&1&\\128&4&\\&&4\end{matrix}\right)=\left(\begin{matrix}4&1&\\128&4&\\&&4\end{matrix}\right)\left(\begin{matrix}1+\frac{-32w+32y}{7}&\frac{-w+8y}{7}&\frac{-x}{7}\\\frac{1024w-128y}{7}&1+\frac{32w-32y}{7}&\frac{32x}{7}\\128z&32z&1\end{matrix}\right),
\]
and for $a\in\Zz_2^\times$ and $\left(\begin{matrix}r&s\\t&u\end{matrix}\right)\in\GL_2(\Zz_2)$, we have
\begin{align*}
 \left(\begin{matrix}a&&\\ &r&s\\ &t&u\end{matrix}\right) \left(\begin{matrix}4&1&\\128&4&\\&&4\end{matrix}\right) &= \left(\begin{matrix}4&\frac{au}{ru-st}&-\frac{as}{ru-st}\\128a^{-1}r&4&0\\128a^{-1}t&0&4\end{matrix}\right)\left(\begin{matrix}a&&\\ &r&s\\ &t&u\end{matrix}\right). \\
 \end{align*}

For $\left(\begin{matrix}r&s\\t&u\end{matrix}\right)$, $\left(\begin{matrix}r'&s'\\t'&u'\end{matrix}\right) \in \GL_2(\Zz_2)$, we have
\begin{align*}
&\left(\begin{matrix}4&u&-s\\128r&4&\\128t&&4\end{matrix}\right)^{-1}\left(\begin{matrix}4&u'&-s'\\128r'&4&\\128t'&&4\end{matrix}\right)\\
&=\frac{1}{1-8(ru-st)}\left(\begin{matrix}1-8(r'u-st')&\frac{u'-u}{4}&\frac{s-s'}{4}\\ 128\left(\frac{r'-r}{4} + 2s(r't-rt')\right) &1-8(ru'-st)&8r(s'-s)\\128\left(\frac{t'-t}{4} + 2u(r't-rt')\right)&8t(u-u')&1-8(ru-s't)\end{matrix}\right).
\end{align*}
This implies that $\left(\begin{matrix}4&u&-s\\128r&4&\\128t&&4\end{matrix}\right)K^{\GL_3} = \left(\begin{matrix}4&u'&-s'\\128r'&4&\\128t'&&4\end{matrix}\right)K^{\GL_3} $ if and only if $\left(\begin{matrix}r&s\\t&u\end{matrix}\right)\equiv\left(\begin{matrix}r'&s'\\t'&u'\end{matrix}\right) \bmod 4$.

Therefore there exists a bijection 
\begin{align*}
K^{\GL_3} \left(\begin{matrix}36&1&\\128&4&\\&&4\end{matrix}\right)K^{\GL_3} / K^{\GL_3} &\to \SL_2(\Zz/4\Zz);\\
 \left(\begin{matrix}a&b&c\\128x&r&s\\128y&t&u\end{matrix}\right)\left(\begin{matrix}36&1&\\128&4&\\&&4\end{matrix}\right) K^{\GL_3}   &\mapsto  \left(\begin{matrix}a^{-1}r&\frac{as}{ru-st}\\a^{-1}t&\frac{au}{ru-st}\end{matrix}\right)\ \bmod 4.
 \end{align*}
\end{proof}

Since $F$ is $K_7$-invariant and $\chi$ is unramified, there exists $f\in \Pi^{K^{\GL_3}}$ such that $f(1) = F$ and whose support is contained in $P(\Qq_2)K^{\GL_3}$.
Then, by Equation (\ref{hecke6}) and Lemma \ref{lemma21}, we obtain
\begin{align*}
T^{\GL_3}_{\left(\begin{smallmatrix}36&1&\\128&4&\\&&4\end{smallmatrix}\right)}f&=8f,\\
 \sum_{\substack{r,s,t,u\in\{-1,0,1,2\}\\ ru-st \equiv 1 \bmod 4}}f(\left(\begin{matrix}4&u&-s\\128r&4&0\\128t&0&4\end{matrix}\right)) &=8f(1).
\end{align*}
Since $\supp f \subset P(\Qq_2)K^{\GL_3}$, if $t\not=0$ then $f(\left(\begin{matrix}4&u&-s\\128r&4&0\\128t&0&4\end{matrix}\right))=0$. So we have
\begin{align*}
 8F &= 8f(1)\\
 &=\sum_{\substack{r,s,t,u\in\{-1,0,1,2\}\\ ru-st \equiv 1 \bmod 4}}f(\left(\begin{matrix}4&u&-s\\128r&4&0\\128t&0&4\end{matrix}\right)) \\
 &= \sum_{r\in\{-1,1\}, s\in\{-1,0,1,2\}}f(\left(\begin{matrix}4&r&-s\\128r&4&0\\0&0&4\end{matrix}\right)) \\
 &= 4\left(\pi(\left(\begin{matrix}1&1/4\\32&1\end{matrix}\right))F + \pi(\left(\begin{matrix}1&-1/4\\-32&1\end{matrix}\right))F \right).
 \end{align*}
 Assume $\alpha =\frac{1}{8}\left(\begin{matrix}&1\\2&\end{matrix}\right)$. Then we obtain
 \begin{align*}
2 &= 2F(\left(\begin{matrix}8&\\&1\end{matrix}\right))\\
&=\pi(\left(\begin{matrix}1&1/4\\32&1\end{matrix}\right))F(\left(\begin{matrix}8&\\&1\end{matrix}\right)) + \pi(\left(\begin{matrix}1&-1/4\\-32&1\end{matrix}\right))F(\left(\begin{matrix}8&\\&1\end{matrix}\right))\\
&= F(\left(\begin{matrix}8&\\&1\end{matrix}\right)\left(\begin{matrix}1&1/4\\32&1\end{matrix}\right)) + F(\left(\begin{matrix}8&\\&1\end{matrix}\right)\left(\begin{matrix}1&-1/4\\-32&1\end{matrix}\right)) \\
&=\psi_\alpha(\left(\begin{matrix}1&2\\4&1\end{matrix}\right)) F(\left(\begin{matrix}8&\\&1\end{matrix}\right))+ \psi_\alpha(\left(\begin{matrix}1&-2\\-4&1\end{matrix}\right))F(\left(\begin{matrix}8&\\&1\end{matrix}\right)) \\
&=\psi(\tr\left(\frac{1}{8}\left(\begin{matrix}&1\\2&\end{matrix}\right)\left(\begin{matrix}1&2\\4&1\end{matrix}\right)\right)) F(\left(\begin{matrix}8&\\&1\end{matrix}\right))+ \psi(\left(\tr\frac{1}{8}\left(\begin{matrix}&1\\2&\end{matrix}\right)\left(\begin{matrix}1&-2\\-4&1\end{matrix}\right)\right))F(\left(\begin{matrix}8&\\&1\end{matrix}\right)) \\
&=\psi(1) + \psi(-1) = -2,
\end{align*}
which is a contradiction. So we have $\alpha =\frac{1}{8}\left(\begin{matrix}&1\\-2&\end{matrix}\right)$.

Next, we determine $\Lambda$. Since $J_\alpha / U_\mathfrak{U}^3 \cong E^\times / E^\times \cap U_\mathfrak{U}^3$ is generated by $\left(\begin{matrix}1&1\\-2&1\end{matrix}\right)$, $\left(\begin{matrix}-1&\\&-1\end{matrix}\right)$ and $\left(\begin{matrix}&1\\-2&\end{matrix}\right)$, we have only to determine $\Lambda(\left(\begin{matrix}1&1\\-2&1\end{matrix}\right))$ and $\Lambda(\left(\begin{matrix}&1\\-2&\end{matrix}\right))$.

We calculate $\Lambda(\left(\begin{matrix}1&1\\-2&1\end{matrix}\right))$.
First, since $\Lambda|_{\Zz_2^\times}$ is trivial, we have
\begin{align*}
F(\left(\begin{matrix}8&1\\16&1\end{matrix}\right)) &= F(\frac{-1}{3}\left(\begin{matrix}1&1\\-2&1\end{matrix}\right)\left(\begin{matrix}1&\\-4&-3\end{matrix}\right)\left(\begin{matrix}8&\\&1\end{matrix}\right))\\
&= \Lambda(\left(\begin{matrix}1&1\\-2&1\end{matrix}\right))\psi_\alpha(\left(\begin{matrix}1&\\-4&-3\end{matrix}\right))F(\left(\begin{matrix}8&\\&1\end{matrix}\right)) \\
&= \Lambda(\left(\begin{matrix}1&1\\-2&1\end{matrix}\right))\psi\left(-\frac{1}{2}\right)F(\left(\begin{matrix}8&\\&1\end{matrix}\right)).
\end{align*}
Therefore, we have 
\begin{align*}
\Lambda(\left(\begin{matrix}1&1\\-2&1\end{matrix}\right))\ = \psi\left(\frac{1}{2}\right)F(\left(\begin{matrix}8&1\\16&1\end{matrix}\right)).\stepcounter{equation}\tag{\theequation}\label{lambdavalue}
\end{align*}

By a similar argument as the proof of Lemma \ref{lemma21}, we have
\begin{align*}
K^{\GL_3} \left(\begin{matrix}8&1&\\128&8&\\&&8\end{matrix}\right)K^{\GL_3} = \coprod_{\substack{r,s,t,u\in\{0, \ldots, 7\}\\ ru-st \equiv 1 \bmod 8}}\left(\begin{matrix}8&u&-s\\128r&8&0\\128t&0&8\end{matrix}\right)K^{\GL_3} .
\end{align*}
Therefore, by Equation (\ref{hecke7}), we have
\begin{align*}
32f &=T^{\GL_3}_{\left(\begin{smallmatrix}8&1&\\128&8&\\&&8\end{smallmatrix}\right)}f,\\
32f(1) &=\sum_{\substack{r,s,t,u\in\{0, \ldots, 7\}\\ ru-st \equiv 1 \bmod 8}}\Pi(\left(\begin{matrix}8&u&-s\\128r&8&0\\128t&0&8\end{matrix}\right))f(1),\\
 32F&= \sum_{r\in\{1,3,5,7\}, s\in\{0, \ldots, 7\}}f(\left(\begin{matrix}8&1/r&-s\\128r&8&0\\0&0&8\end{matrix}\right)) \\
 &= 8 \sum_{r\in\{1,3,5,7\}}\pi(\left(\begin{matrix}1&1/8r\\16r&1\end{matrix}\right))F.
 \end{align*}
 So we have
 \begin{align*}
 4&=4F(\left(\begin{matrix}8&\\&1\end{matrix}\right))\\
 &=  \sum_{r\in\{1,3,5,7\}}\pi(\left(\begin{matrix}1&1/8r\\16r&1\end{matrix}\right))F(\left(\begin{matrix}8&\\&1\end{matrix}\right))\\
 &=  \sum_{r\in\{1,3,5,7\}}F(\left(\begin{matrix}8&\\&1\end{matrix}\right)\left(\begin{matrix}1&1/8r\\16r&1\end{matrix}\right))\\
&=  \sum_{r\in\{1,3,5,7\}}F(\left(\begin{matrix}2r-1&\\&2r-1\end{matrix}\right)\left(\begin{matrix}\frac{-r+2}{r(2r-1)}&\frac{r-1}{r(2r-1)}\\\frac{2(1-r)}{2r-1}&1\end{matrix}\right)\left(\begin{matrix}8&1\\16&1\end{matrix}\right))\\
&=  \sum_{r\in\{1,3,5,7\}}F(\left(\begin{matrix}\frac{-r+2}{r(2r-1)}&\frac{r-1}{r(2r-1)}\\\frac{2(1-r)}{2r-1}&1\end{matrix}\right)\left(\begin{matrix}8&1\\16&1\end{matrix}\right)).
\end{align*}
The last equation follows from the fact that $r\in\Zz_2^\times$ and so $\Lambda(\left(\begin{matrix}2r-1&\\&2r-1\end{matrix}\right)) = \omega_\pi(2r-1) = 1$.

Since $r\in \Zz_2^\times$ and  $\frac{-r+2}{r(2r-1)}-1 = \frac{-r+2-r(2r-1)}{r(2r-1)} = -\frac{2(r^2-1)}{r(2r-1)} \equiv 0 \bmod 4$, we have $\left(\begin{matrix}\frac{-r+2}{r(2r-1)}&\frac{r-1}{r(2r-1)}\\\frac{2(1-r)}{2r-1}&1\end{matrix}\right) \in U_\mathfrak{U}^3$. So we have 
\begin{align*}
4&=  \sum_{r\in\{1,3,5,7\}}F(\left(\begin{matrix}\frac{-r+2}{r(2r-1)}&\frac{r-1}{r(2r-1)}\\\frac{2(1-r)}{2r-1}&1\end{matrix}\right)\left(\begin{matrix}8&1\\16&1\end{matrix}\right))\\
&=\sum_{r\in\{1,3,5,7\}}\psi_\alpha(\left(\begin{matrix}\frac{-r+2}{r(2r-1)}&\frac{r-1}{r(2r-1)}\\\frac{2(1-r)}{2r-1}&1\end{matrix}\right))F(\left(\begin{matrix}8&1\\16&1\end{matrix}\right))\\
&=\sum_{r\in\{1,3,5,7\}}\psi(\tr\frac{1}{8}\left(\begin{matrix}&1\\-2\end{matrix}\right)\left(\left(\begin{matrix}\frac{-r+2}{r(2r-1)}&\frac{r-1}{r(2r-1)}\\\frac{2(1-r)}{2r-1}&1\end{matrix}\right) -1\right))F(\left(\begin{matrix}8&1\\16&1\end{matrix}\right))\\
&=\sum_{r\in\{1,3,5,7\}}\psi(\frac{1}{8}\left(\frac{2(1-r)}{2r-1} - \frac{2(r-1)}{r(2r-1)}\right))F(\left(\begin{matrix}8&1\\16&1\end{matrix}\right))\\
&=\sum_{r\in\{1,3,5,7\}} \psi(\frac{1}{8}\frac{2(r^2-1)}{r(2r-1)} )F(\left(\begin{matrix}8&1\\16&1\end{matrix}\right))\\
&=4F(\left(\begin{matrix}8&1\\16&1\end{matrix}\right)),
\end{align*}
since $r^2-1\equiv 0 \bmod 8$ for $r\in\Zz_2^\times$.
Therefore, we have $F(\left(\begin{matrix}8&1\\16&1\end{matrix}\right))=1$, and by Equation (\ref{lambdavalue}), we obtain $\Lambda(\left(\begin{matrix}1&1\\-2&1\end{matrix}\right)) = \psi\left(\frac{1}{2}\right)$.
 
 Finally we calculate $\Lambda(\left(\begin{matrix}&1\\-2&\end{matrix}\right))$.
\begin{lemma}\label{lemma45} We have 
\[
K^{\GL_3}\left(\begin{matrix}&1&\\-128&&\\&&1\end{matrix}\right)K^{\GL_3} = \coprod_{i=0}^{127}\left(\begin{matrix}&1&\\-128&&i\\&&1\end{matrix}\right)K^{\GL_3}  \amalg \coprod_{i=0}^{63}\left(\begin{matrix}&1&\\&&1\\-128&&2i\end{matrix}\right)K^{\GL_3}  .
\]
\end{lemma}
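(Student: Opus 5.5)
We will prove the decomposition by the same strategy as Lemma \ref{lemma21}: first count the left cosets in $K^{\GL_3} g K^{\GL_3}$, where $g=\left(\begin{smallmatrix}&1&\\-128&&\\&&1\end{smallmatrix}\right)$, then show that the listed $128+64=192$ matrices lie in the double coset and are pairwise distinct modulo $K^{\GL_3}$.

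\textbf{Step 1: counting cosets.} The number of cosets equals the index $[K^{\GL_3} : K^{\GL_3}\cap gK^{\GL_3}g^{-1}]$. Conjugation by $g$ acts by $k\mapsto gkg^{-1}$. For $k=(k_{ij})\in K^{\GL_3}$, the matrix $g^{-1}kg$ has entries that are the $k_{ij}$ rescaled by powers of $128$, with rows and columns $1,2$ swapped. The condition $g^{-1}kg\in K^{\GL_3}$ translates into congruences on the entries of $k$. We expect exactly two of them: $k_{12}\equiv 0\bmod 128$, coming from the swapped $(1,2)$-entry, and $k_{32}\equiv 0 \bmod 128$, coming from the entry $g^{-1}kg$ at position $(3,1)$. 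The existing conditions $k_{21},k_{31}\equiv0 \bmod 128$ are already present in $K^{\GL_3}$. This gives index $128\cdot128$ in naive terms, which is too many. So we must instead compute carefully which entries of $g^{-1}kg$ need integrality versus the $128$-divisibility. We will redo this by computing $g^{-1}kg$ explicitly and reading off the index; we expect the result $192 = 128+64$. Equivalently, we can identify the coset space with an orbit in $\mathbbm{P}^2$-like data modulo $128$, analogous to the bijection with $\SL_2(\Zz/4)$ used in Lemma \ref{lemma21}.

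\textbf{Step 2: membership and distinctness.} Each listed representative is obtained by left-multiplying $g$ by an explicit element of $K^{\GL_3}$, or by right-multiplying. For the first family,
\[
\left(\begin{smallmatrix}&1&\\-128&&i\\&&1\end{smallmatrix}\right)=g\left(\begin{smallmatrix}1&&-i/128\\&1&\\&&1\end{smallmatrix}\right),
\]
which is not integral. Instead we write it as $\left(\begin{smallmatrix}1&&\\&1&i\\&&1\end{smallmatrix}\right)g$, which lies in $K^{\GL_3}g$. For the second family we use the permutation of rows $2,3$ composed with the lower unipotent $\left(\begin{smallmatrix}1&&\\&1&\\&2i&1\end{smallmatrix}\right)$, suitably combined with an element of $K^{\GL_3}$ realizing the swap, since $\left(\begin{smallmatrix}1&&\\&&1\\&1&\end{smallmatrix}\right)\in K^{\GL_3}$. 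Distinctness follows by computing $h^{-1}h'$ for two representatives $h,h'$ and checking when it lies in $K^{\GL_3}$. Within the first family this holds iff $i\equiv i' \bmod 128$. Within the second family it holds iff $2i\equiv 2i' \bmod 128$. Across the two families, $h^{-1}h'$ has a lower-left $2\times2$ block that is not of the required shape.

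\textbf{Main obstacle.} The main difficulty is the completeness step: showing that every coset $kgK^{\GL_3}$ is one of these. We will use the decomposition (\ref{decompofK7}) of $K^{\GL_3}$ as a product of a $\GL_2(\Zz_2)$-block in rows and columns $2,3$, a diagonal torus part, and an upper unipotent times lower congruence part. We then push the factors that normalize into $gK^{\GL_3}g^{-1}$ to the right. The remaining freedom is the $\GL_2(\Zz_2)$-block acting on the vector $(-128,0)^T$ and the $(2,3)$ entry modulo $128$. These are classified by the first column of the block modulo $128$ up to the relevant stabilizer, and that classification yields exactly the two families. If this bookkeeping becomes messy, we fall back on the count from Step 1 combined with the distinctness from Step 2.
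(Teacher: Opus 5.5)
\textbf{Gap: completeness is not established.} Your membership computations are correct, and so are your within-family distinctness criteria. What is missing is the heart of the lemma: that every coset in $K^{\GL_3}gK^{\GL_3}$ is one of the listed ones. Neither of your two routes delivers this.

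In Step 1 the congruences you derive ($k_{12}\equiv k_{32}\equiv 0 \bmod 128$) are wrong. You notice the resulting count is wrong and then replace the computation by ``we expect $192$''. Done explicitly,
$g^{-1}kg=\left(\begin{smallmatrix}k_{22}&-k_{21}/128&-k_{23}/128\\-128k_{12}&k_{11}&k_{13}\\-128k_{32}&k_{31}&k_{33}\end{smallmatrix}\right)$.
The entries $-128k_{12}$ and $-128k_{32}$ impose nothing; the only condition is integrality of the $(1,3)$-entry. Hence $K^{\GL_3}\cap gK^{\GL_3}g^{-1}=\{k\in K^{\GL_3}\mid k_{23}\equiv 0\bmod 128\}$. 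This subgroup contains the principal congruence subgroup of level $128$. So its index equals the index in $\GL_2(\Zz/128)$ of the matrices with vanishing $(1,2)$-entry, namely $|\mathbbm{P}^1(\Zz/128)|=128+64=192$. With this computation supplied, your count-plus-distinctness argument is a valid alternative to the paper's proof, which instead decomposes each $kg$ explicitly.

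\textbf{The direct classification.} Your fallback misidentifies the invariant. For $k,k'\in K^{\GL_3}$ with lower-right $2\times2$ blocks $B,B'$, one has $(k^{-1}k')_{23}\equiv (B^{-1}B')_{12}\bmod 128$. So $kgK^{\GL_3}$ depends only on the \emph{second} column $(k_{23}:k_{33})\in\mathbbm{P}^1(\Zz/128)$ of the block, not the first; the first column is absorbed by $K^{\GL_3}$ on the right. Your two families are exactly $k=\left(\begin{smallmatrix}1&&\\&1&i\\&&1\end{smallmatrix}\right)$, giving $(i:1)$, and $k=\left(\begin{smallmatrix}1&&\\&&1\\&1&2i\end{smallmatrix}\right)$, giving $(1:2i)$. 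These enumerate $\mathbbm{P}^1(\Zz/128)$ once each, which yields completeness and disjointness simultaneously. The paper does the same thing by hand: it moves the unipotent and diagonal factors of (\ref{decompofK7}) to the right of $g$, then splits on whether $u$ is a unit (giving $i=s/u$) or $u\in 2\Zz_2$ (giving $2i=u/s$).

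\textbf{Cross-family distinctness.} Your justification here is wrong. Write $h_i$ and $h'_j$ for the representatives of the first and second family. Then $h_i^{-1}h'_j=\left(\begin{smallmatrix}-i&&\frac{2ij-1}{128}\\&1&\\-128&&2j\end{smallmatrix}\right)$, whose lower-left block is perfectly admissible. It lies outside $K^{\GL_3}$ because $(2ij-1)/128$ is not integral.
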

\begin{proof}
By Equation (\ref{decompofK7}), for any $\left(\begin{matrix}a&b&c\\128x&r&s\\128y&t&u\end{matrix}\right) \in K_7$, we have
\begin{align*}
\left(\begin{matrix}a&b&c\\128x&r&s\\128y&t&u\end{matrix}\right) &= \left(\begin{matrix}1&&\\&r&s\\&t&u\end{matrix}\right)\left(\begin{matrix}a&&\\&1&\\&&1\end{matrix}\right)\left(\begin{matrix}1&a^{-1}b&a^{-1}c\\ \frac{128(ux-sy)}{ru-st}&1&\\ \frac{128(-tx+ry)}{ru-st}&&1\end{matrix}\right). 
\end{align*}
For $w$, $x$, $y$, $z\in\Zz_2$, we have
\[
\left(\begin{matrix}1&w&x\\ 128y&1&\\128 z&&1\end{matrix}\right)\left(\begin{matrix}&1&\\-128&&\\&&1\end{matrix}\right)=\left(\begin{matrix}&1&\\-128&&\\&&1\end{matrix}\right)\left(\begin{matrix}1 &-y&0\\-128w&1&x\\0&128z&1\end{matrix}\right),
\]
and for $a\in\Zz_2^\times$ we have
\begin{align*}
 \left(\begin{matrix}a&&\\ &1&\\ &&1\end{matrix}\right) \left(\begin{matrix}&1&\\-128&&\\&&1\end{matrix}\right) &=\left(\begin{matrix}&1&\\-128&&\\&&1\end{matrix}\right) \left(\begin{matrix} 1&&\\&a&\\&&1\end{matrix}\right).
 \end{align*}
For $\left(\begin{matrix}r&s\\t&u\end{matrix}\right)\in\GL_2(\Zz_2)$ with $u \in \Zz_2^\times$, we have
\begin{align*}
 \left(\begin{matrix}1&&\\ &r&s\\ &t&u\end{matrix}\right) \left(\begin{matrix}&1&\\-128&&\\&&1\end{matrix}\right) &=  \left(\begin{matrix}&1&\\-128&&\frac{s}{u}\\&&1\end{matrix}\right) \left(\begin{matrix}\frac{ru-st}{u}&&\\ &1&\\ -128t&&u\end{matrix}\right),
 \end{align*}
and for $\left(\begin{matrix}r&s\\t&u\end{matrix}\right)\in\GL_2(\Zz_2)$ with $u \in 2\Zz_2$, we have $ s \in \Zz_2^\times $ and 
\begin{align*}
 \left(\begin{matrix}1&&\\ &r&s\\ &t&u\end{matrix}\right) \left(\begin{matrix}&1&\\-128&&\\&&1\end{matrix}\right) &=  \left(\begin{matrix}&1&\\&&1\\-128&&\frac{u}{s}\end{matrix}\right) \left(\begin{matrix}\frac{-ru+st}{s}&&\\ &1&\\ -128r&&s\end{matrix}\right).
 \end{align*}
 Also, for $i$, $j\in\Zz_2$, we have 
 \begin{align*}
 \left(\begin{matrix}&1&\\-128&&i + 128j\\&&1\end{matrix}\right) =  \left(\begin{matrix}&1&\\-128&&i \\&&1\end{matrix}\right)\left(\begin{matrix}1&&-j\\&1&\\&&1\end{matrix}\right),\\
 \left(\begin{matrix}&1&\\&&1\\-128&&2i+128j\end{matrix}\right) =  \left(\begin{matrix}&1&\\&&1\\-128&&2i\end{matrix}\right) \left(\begin{matrix}1&&-j\\&1&\\&&1\end{matrix}\right).
 \end{align*}
 Therefore we have
\begin{align*}
K^{\GL_3}\left(\begin{matrix}&1&\\-128&&\\&&1\end{matrix}\right)K^{\GL_3} = \bigcup_{i=0}^{127}\left(\begin{matrix}&1&\\-128&&i\\&&1\end{matrix}\right)K^{\GL_3}  \cup \bigcup_{i=0}^{63}\left(\begin{matrix}&1&\\&&1\\-128&&2i\end{matrix}\right)K^{\GL_3} .
\end{align*}
The union on the right hand side is disjoint since for $i$, $j \in \Zz_2$ we have 
 \begin{align*}
 \left(\begin{matrix}&1&\\-128&&i \\&&1\end{matrix}\right)^{-1} \left(\begin{matrix}&1&\\-128&&j \\&&1\end{matrix}\right)  =  \left(\begin{matrix}1&&\frac{i-j}{128}\\&1& \\&&1\end{matrix}\right),\\
 \left(\begin{matrix}&1&\\&&1\\-128&&2i \end{matrix}\right)^{-1} \left(\begin{matrix}&1& \\&&1\\-128&&2j\end{matrix}\right)  =  \left(\begin{matrix}1&&\frac{i-j}{64}\\&1& \\&&1\end{matrix}\right),\\
 \left(\begin{matrix}&1&\\-128&&i \\&&1\end{matrix}\right)^{-1}  \left(\begin{matrix}&1&\\&&1\\-128&&2j\end{matrix}\right)=  \left(\begin{matrix}-i&&\frac{2ij-1}{128}\\&1& \\-128&&2j\end{matrix}\right).
 \end{align*}
 Therefore, this lemma holds.
\end{proof}

By Equation (\ref{hecke5}), we have
\begin{align*}
T^{\GL_3}_{\left(\begin{smallmatrix}&1&\\-128&&\\&&1\end{smallmatrix}\right)}f&=(8-8\sqrt{-1})f.
\end{align*}
So, by Lemma \ref{lemma45}, we have
\begin{align*}
(8-8\sqrt{-1})F&=(8-8\sqrt{-1})f(1)\\
&=T^{\GL_3}_{\left(\begin{smallmatrix}&1&\\-128&&\\&&1\end{smallmatrix}\right)}f(1)\\
&=\sum_{i=0}^{127}\Pi(\left(\begin{matrix}&1&\\-128&&i\\&&1\end{matrix}\right)) f(1)+ \sum_{i=0}^{63}\Pi(\left(\begin{matrix}&1&\\&&1\\-128&&2i\end{matrix}\right)) f(1) \\
&=\sum_{i=0}^{127}f(\left(\begin{matrix}&1&\\-128&&i\\&&1\end{matrix}\right)) + \sum_{i=0}^{63}f(\left(\begin{matrix}&1&\\&&1\\-128&&2i\end{matrix}\right)).
\end{align*}
Note that $f(\left(\begin{matrix}&1&\\&&1\\-128&&2i\end{matrix}\right)) =0$ for all $i\in\Zz$ since $\left(\begin{matrix}&1&\\&&1\\-128&&2i\end{matrix}\right)\notin P(\Qq_2)K_7$. Therefore we have
\begin{align*}
(8-8\sqrt{-1})F&=128\pi(\left(\begin{matrix}&1\\-128&\end{matrix}\right))f(1),  \\
\frac{1-\sqrt{-1}}{16}F&=\pi(\left(\begin{matrix}&1\\-128&\end{matrix}\right))F.
\end{align*}
Then we have
\begin{align*}
\frac{1-\sqrt{-1}}{16} &= \frac{1-\sqrt{-1}}{16}F(\left(\begin{matrix}8&\\&1\end{matrix}\right))\\
&=\left(\pi(\left(\begin{matrix}&1\\-128&\end{matrix}\right))F\right)(\left(\begin{matrix}8&\\&1\end{matrix}\right)) \\
&=F(\left(\begin{matrix}8&\\&1\end{matrix}\right)\left(\begin{matrix}&1\\-128&\end{matrix}\right))\\
&=F(\left(\begin{matrix}&8\\-16&\end{matrix}\right)\left(\begin{matrix}8&\\&1\end{matrix}\right))\\
&=\Lambda(\left(\begin{matrix}&8\\-16&\end{matrix}\right))F(\left(\begin{matrix}8&\\&1\end{matrix}\right)).
\end{align*}
Since $\Lambda|_{\Qq_2^\times} = \omega_\pi = \chi^{-1}$, we have 
\begin{align*}
\frac{1-\sqrt{-1}}{16}&=\Lambda(\left(\begin{matrix}&8\\-16&\end{matrix}\right))F(\left(\begin{matrix}8&\\&1\end{matrix}\right))\\
&=\chi(8)^{-1}\Lambda(\left(\begin{matrix}&1\\-2&\end{matrix}\right))\\
&=- \frac{\sqrt{-1}}{8}\Lambda(\left(\begin{matrix}&1\\-2&\end{matrix}\right)).
\end{align*}
So we obtain $\Lambda(\left(\begin{matrix}&1\\-2&\end{matrix}\right)) =\frac{1+\sqrt{-1}}{2}$.

Then, we have the following theorem:
\begin{theo}We have
\[\Pi = \Ind_P^{\GL_3(\Qq_2)}(\pi\boxtimes \chi),\] 
where $\chi$ is the unramified character of $\Qq_2^\times$ with $\chi(2) = -2\sqrt{-1}$ and $\pi$ is the supercuspidal representation of $\GL_2(\Qq_2)$ given by 
\[
\pi = \cInd_{J_\alpha}^{\GL_2(\Qq_2)} \Lambda.
\]
Here $\alpha =\frac{1}{8} \left(\begin{matrix}&1\\-2&\end{matrix}\right)$ and $\Lambda$ is the character of $J_\alpha$ determined by 
\[
\begin{cases}
\Lambda(\left(\begin{matrix}a&\\&a\end{matrix}\right)) = \chi^{-1}(a) &a\in\Qq_2^\times,\\
\Lambda(\left(\begin{matrix}&1\\-2&\end{matrix}\right)) = \frac{1+\sqrt{-1}}{2},\\
\Lambda(\left(\begin{matrix}1&1\\-2&1\end{matrix}\right)) = \psi\left(\frac{1}{2}\right),\\
\Lambda|_{U_\mathfrak{U}^3} = \psi_\alpha.
\end{cases}
\]
\end{theo}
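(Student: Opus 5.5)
This theorem is the assembly of Proposition \ref{prop33} and the computations of Section 4, so the plan is to collect those results in order and check that together they determine $\Pi$ uniquely.

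First, Proposition \ref{prop33} gives $\Pi=\Ind_P^{\GL_3(\Qq_2)}(\pi\boxtimes\chi)$. Here $\chi$ is unramified with $\chi(2)=-2\sqrt{-1}$, and $\pi$ is supercuspidal with $\cond(\pi)=7$ and $\omega_\pi=\chi^{-1}$. By \cite[Appendix $3.8$]{BM02}, $\pi=\cInd_{J_\alpha}^{\GL_2(\Qq_2)}\Lambda$ for some $\alpha\in\mathfrak{P}^{-5}\setminus\mathfrak{P}^{-4}$ and some one-dimensional $\Lambda\in C(\psi_\alpha,\mathfrak{U})$. This immediately gives the first and last lines of the defining system: $\Lambda$ restricted to the center is $\omega_\pi=\chi^{-1}$, and $\Lambda|_{U_\mathfrak{U}^3}=\psi_\alpha$.

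Second, I reduce $\alpha$ up to conjugacy. Conjugating by an upper-triangular matrix brings $\alpha$ to the form $\alpha(D,t)$. Lemma \ref{lemmapsi} then lets us take $D\in\{\pm1\}$ and $t\in\{0,1\}$, and the unramified central character forces $t=0$. To decide between $D=1$ and $D=-1$, I use the $K_7$-fixed vector $F$ supported on $J_\alpha\left(\begin{smallmatrix}8&\\&1\end{smallmatrix}\right)K_7$. I lift it to $f\in\Pi^{K^{\GL_3}}$ supported on $P(\Qq_2)K^{\GL_3}$, which is legitimate because $\dim\Pi^{K^{\GL_3}}=1$ by Proposition \ref{prop22}. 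I then apply equation (\ref{hecke6}) using the coset decomposition of Lemma \ref{lemma21}. Evaluating at $\left(\begin{smallmatrix}8&\\&1\end{smallmatrix}\right)$ yields $2=\psi(1)+\psi(-1)=-2$ when $D=1$, a contradiction, so $D=-1$ and $\alpha=\frac18\left(\begin{smallmatrix}&1\\-2&\end{smallmatrix}\right)$.

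Third, I pin down $\Lambda$. The quotient $J_\alpha/U_\mathfrak{U}^3$ is generated by $-1$, $\left(\begin{smallmatrix}1&1\\-2&1\end{smallmatrix}\right)$ and $\left(\begin{smallmatrix}&1\\-2&\end{smallmatrix}\right)$, so only two values remain to be found.
\begin{enumerate}
\item For $\left(\begin{smallmatrix}1&1\\-2&1\end{smallmatrix}\right)$: equation (\ref{hecke7}), with the analogous coset decomposition, gives $F\left(\left(\begin{smallmatrix}8&1\\16&1\end{smallmatrix}\right)\right)=1$. Equation (\ref{lambdavalue}) then gives $\Lambda=\psi(1/2)$.
\item For $\left(\begin{smallmatrix}&1\\-2&\end{smallmatrix}\right)$: equation (\ref{hecke5}) with Lemma \ref{lemma45} gives $\pi\left(\left(\begin{smallmatrix}&1\\-128&\end{smallmatrix}\right)\right)F=\frac{1-\sqrt{-1}}{16}F$. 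Evaluating at $\left(\begin{smallmatrix}8&\\&1\end{smallmatrix}\right)$ and dividing out $\chi(8)^{-1}$ gives $\Lambda=\frac{1+\sqrt{-1}}{2}$.
\end{enumerate}

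Since a character of $J_\alpha$ is determined by these values together with its restrictions to the center and to $U_\mathfrak{U}^3$, the theorem follows. The main obstacle is justifying that $F$ is a legitimate $K_7$-fixed vector in every candidate $\pi$, so that the Hecke-eigenvalue equations can be read off by evaluating $F$ at $\left(\begin{smallmatrix}8&\\&1\end{smallmatrix}\right)$. This is the well-definedness check on $K_7\cap\left(\begin{smallmatrix}8&\\&1\end{smallmatrix}\right)^{-1}J_\alpha\left(\begin{smallmatrix}8&\\&1\end{smallmatrix}\right)$, which the preceding lemma supplies.
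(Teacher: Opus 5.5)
Your proposal is correct and is essentially the paper's own argument. It runs Proposition \ref{prop33} plus the Bushnell--Kutzko description, the reduction to $\alpha(D,0)$, the exclusion of $D=1$ via (\ref{hecke6}) and Lemma \ref{lemma21}, and the determination of $\Lambda$ from (\ref{hecke7}) and (\ref{hecke5}) using the vector $F$ supported on $J_\alpha\left(\begin{smallmatrix}8&\\&1\end{smallmatrix}\right)K_7$. The one step you compress is the passage to the form $\alpha(D,t)$: conjugating $\alpha=\frac18\left(\begin{smallmatrix}2a&b\\2c&2d\end{smallmatrix}\right)$ by an upper-triangular element of $\GL_2(\Zz_2)$ needs $b,c\in\Zz_2^\times$, and that comes from $\alpha+\mathfrak{P}^{-4}$ containing no nilpotent element, not merely from $\alpha\notin\mathfrak{P}^{-4}$.
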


 \bibliography{local_2_adic_component_of_a_gl3_representation}
\bibliographystyle{my_amsplain}

\end{document}